\documentclass{article}

\usepackage{amsmath,amssymb,amsthm}

\theoremstyle{definition} \newtheorem{definition}{Definition}
\theoremstyle{definition} \newtheorem{hypothesis}{Hypothesis}
\theoremstyle{definition} \newtheorem*{remark}{Remark}
\theoremstyle{plain}    \newtheorem{theorem}{Theorem}
                          \newtheorem{corollary}[theorem]{Corollary}
                          \newtheorem{lemma}[theorem]{Lemma}

\setlength\parindent{0pt}

\title{A Maximal Inequality for $p$th Power of Stochastic Convolution Integrals}
\author{Erfan Salavati and Bijan Z. Zangeneh\\ \\
Department of Mathematical Sciences\\
Sharif University of Technology\\
Tehran, Iran}
\date{}

\begin{document}
\maketitle

\begin{abstract}
	An inequality for the $p$th power of the norm of a stochastic convolution integral in a Hilbert space is proved. The inequality is stronger than analogues inequalities in the literature in the sense that it is pathwise and not in expectation.
	
	An application of this inequality is provided for the semilinear stochastic evolution equations with L\'evy noise and monotone nonlinear drift. The existence and uniqueness of the mild solutions in $L^p$ for theses equations is proved and a sufficient condition for exponential asymptotic stability of the solutions is derived.
	
\end{abstract}

\vspace{2mm}
\noindent{Mathematics Subject Classification: 60H10, 60H15, 60G51, 47H05, 47J35.}
\vspace{2mm}

\noindent{Keywords: Stochastic Convolution Integral, It\^o type inequality, Stochastic Evolution Equation, Monotone Operator, L\'evy Noise.}

\section{Introduction}\label{section: introduction}

Stochastic convolution integrals appear in many fields of stochastic analysis. They are integrals of the form
\[ X_t=\int_0^t S_{t-s} dM_s \]
where $M_t$ is a martingale with values in a Hilbert space. Although they are generalization of stochastic integrals but they are different in many ways. For example they are not semimartingales in general and hence the usual results on semimartingales, such as maximal inequalities (i.e. inequalities for $\sup_{0\le s\le t} \|X_s\|$) and existence of c\`adl\`ag versions could not be applied directly to them.

Among first studies in this field one can note the works of Kotelenez~\cite{Kotelenez-1982} and Ichikawa~\cite{Ichikawa} where they consider stochastic convolution integrals with respect to general martingales. They prove a maximal inequality in $L^2$ for stochastic convolution integrals (Theorem~\ref{Theorem: Kotelenez inequality}).

Stochastic convolution integrals arise naturally in proving existence, uniqueness and regularity of the solutions of semilinear stochastic evolution equations,
\[ dX_t = A X_t dt + f(t,X_t) dt + g(t,X_t) dM_t \]
where $A$ is the generator of a $C_0$ semigroup of linear operators on a Hilbert space and $M_t$ is a martingale. The case that the coefficients are Lipschitz operators is studied well and the theorems of existence, uniqueness and continuity with respect to initial data for the solutions in $L^2$ is proved, see e.g~Kotelenez~\cite{Kotelenez-1984}. The proofs are based on the maximal inequality for stochastic convolution integrals, that is Theorem~\ref{Theorem: Kotelenez inequality}. 

These results have been generalized in several directions. One is the maximal inequality for $p$th power of the norm of stochastic convolution integrals. Tubaro has proved an upper estimate for $E[\sup_{0\le s\le t}\vert x(s)\vert ^p]$ with $p\ge2$ in the case that $M_t$ is a real Wiener process.  Ichikawa~\cite{Ichikawa} has proved maximal inequality for $p$th power, $p\ge 2$ in the special case that $M_t$ is a Hilbert space valued continuous martingale. The case of general martingale is proved by Zangeneh~\cite{Zangeneh-Paper} for $p\ge 2$ (see Theorem~\ref{theorem:Burkholder type inequality}). Hamedani and Zangeneh~\cite{Hamedani-Zangeneh-stopped} have generalized the maximal inequality to $0< p <\infty$.


Brzezniak, Hausenblas and Zu~\cite{Brzezniak-Hausenblas-Zhu} have derived a maximal inequality for $p$th power of the norm of stochastic convolutions driven by Poisson random measures.



As far as we know, the maximal inequalities proved for stochastic convolution integrals in the literature all involve expectations. The only result that provides a pathwise (almost sure) bound is Zangeneh~\cite{Zangeneh-paper} in which is proved Theorem~\ref{theorem:ito type inequality} called It\"o type inequality. This inequality provides a pathwise estimate for the square of the norm of stochastic convolution integrals and is the generalization of the It\"o formula to stochastic convolution integrals.

In Section~\ref{section: Stochastic Convolution Integrals} we define and state some results about stochastic convolution integrals that will be used in the sequel.

In Section~\ref{section: Ito type inequality for pth power} we state and prove the main result of this article, i.e. Theorem~\ref{theorem: Ito type inequality for pth power}, which provides a pathwise bound for the $p$th power of stochastic convolution integrals with respect to general martingales. The special case that the martingale is an It\"o integral with respect to a Wiener process has been proved by Jahanipour and Zangeneh~\cite{Jahanipour-Zangeneh}.

The pathwise nature of Theorem~\ref{theorem: Ito type inequality for pth power} enables one to apply it to semilinear stochastic evolution equations with non Lipschitz coefficients. We consider the drift term to be a monotone nonlinear operator and the noise term to be a compensated Poisson random measure and prove the exitence of the mild solution in $L^p$ in Theorem~\ref{theorem: existence in L^p}. The precise assumptions on coefficients wll be stated in Section~\ref{section: Semilinear Stochastic Evolution Equations with Levy Noise and Monotone Nonlinearity}. An auxiliary result is a Bichteler-Jacod inequality in Hilbert spaces proved in Theorem~\ref{theorem: maximal inequality Poisson}. This result has been stated and proved before in the literature, for example in~\cite{Marinelli-Prevot-Rockner}, but we give a new proof for it. We also show the exponential stability of the mild solutions under certain conditions in Theorem~\ref{theorem: stability}.

\section{Stochastic Convolution Integrals}\label{section: Stochastic Convolution Integrals}
Let $H$ be a separable Hilbert space with inner product $\langle \, , \, \rangle$. Let $S_t$ be a $C_0$ semigroup on $H$ with infinitesimal generator $A:D(A)\to H$. Furthermore we assume the exponential growth condition on $S_t$, i.e. there exists a constant $\alpha$ such that $\| S_t \| \le e^{\alpha t}$. If $\alpha=0$, $S_t$ is called a contraction semigroup.

In this section we review some properties and results about convolution integrals of type $X_t=\int_0^t S_{t-s} dM_s$ where $M_t$ is a martingale. These are called stochastic convolution integrals. Kotelenez~\cite{Kotelenez-1984} gives a maximal inequality for stochastic convolution integrals.

\begin{theorem}[Kotelenez,~\cite{Kotelenez-1984}] \label{Theorem: Kotelenez inequality}
    Assume $\alpha\ge 0$. There exists a constant $\mathbf{C}$ such that for any $H$-valued c\`adl\`ag locally square integrable martingale $M_t$ we have
    \[ \mathbb{E} \sup_{0\le t\le T} \|\int_0^t S_{t-s}dM_s\|^2 \le \mathbf{C} e^{4\alpha T} \mathbb{E}[M]_T.\]
\end{theorem}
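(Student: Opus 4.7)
The central difficulty is that the convolution process $X_t = \int_0^t S_{t-s}\,dM_s$ is not a semimartingale when $A$ is unbounded, so Doob's inequality cannot be applied to it directly. My plan is to bypass this by approximating $A$ by its Yosida approximations $A_n = nA(n-A)^{-1}$, whose semigroups $S^n_t$ also satisfy $\|S^n_t\|\le e^{\alpha t}$. Then $X^n_t=\int_0^t S^n_{t-s}\,dM_s$ satisfies the strong equation $dX^n_t = A_n X^n_t\,dt + dM_t$ with bounded generator, and by standard approximation results $X^n_t\to X_t$ (uniformly on $[0,T]$ in $L^2$), so it suffices to prove the inequality for the approximants with a constant independent of $n$.

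For the approximants I would apply the Hilbert-space It\^o formula to $\|X^n_t\|^2$, obtaining
\[
\|X^n_t\|^2 = 2\int_0^t \langle A_n X^n_s, X^n_s\rangle\,ds + 2\int_0^t \langle X^n_{s-}, dM_s\rangle + [M]_t.
\]
The growth bound $\|S^n_t\|\le e^{\alpha t}$ yields the dissipativity estimate $\langle A_n x,x\rangle \le \alpha\|x\|^2$, so with $N_t := 2\int_0^t \langle X^n_{s-},dM_s\rangle$ we get
\[
\|X^n_t\|^2 \le 2\alpha \int_0^t \|X^n_s\|^2\,ds + N_t + [M]_t.
\]
Since $\alpha\ge 0$, taking the supremum over $t'\le t$ preserves the inequality (with $N_{t'}$ replaced by $\sup_{s\le t}|N_s|$), and Grönwall-style reasoning after taking expectations will yield the $e^{4\alpha T}$ factor.

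The step that actually produces the stated constant is the BDG estimate on $N_t$: the quadratic variation satisfies $[N]_t \le 4\int_0^t \|X^n_{s-}\|^2\,d[M]_s$, hence
\[
\mathbb{E}\sup_{s\le t}|N_s| \le C\,\mathbb{E}\Bigl(\sup_{s\le t}\|X^n_s\|^2\cdot [M]_t\Bigr)^{1/2}
\le \tfrac{1}{2}\mathbb{E}\sup_{s\le t}\|X^n_s\|^2 + C'\,\mathbb{E}[M]_t
\]
by Young's inequality. Absorbing the $\tfrac{1}{2}$ term on the left gives
\[
\mathbb{E}\sup_{t'\le t}\|X^n_{t'}\|^2 \le 4\alpha \int_0^t \mathbb{E}\sup_{s'\le s}\|X^n_{s'}\|^2\,ds + \mathbf{C}_0\,\mathbb{E}[M]_t,
\]
and Grönwall's lemma produces the bound $\mathbf{C}_0 e^{4\alpha T}\,\mathbb{E}[M]_T$.

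The main obstacle I anticipate is the justification of the Yosida-approximation step: one needs the uniform-in-$n$ convergence of $X^n$ to $X$ to transfer the bound to the limit, which requires care because $A_n$ converges only strongly, not in norm, and the semigroups must be handled together with the stochastic integral against $M$. A standard density argument (first for simple martingales, then passing to general c\`adl\`ag locally square integrable martingales via localization and isometry) should close this gap, after which the constant $\mathbf{C}$ obtained from the BDG/Young step is universal.
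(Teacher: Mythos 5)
The paper does not prove this theorem at all: it is quoted from Kotelenez~\cite{Kotelenez-1984} as a known result, so there is no in-paper argument to compare yours against. Judged on its own, your proposal is the classical route to this inequality (Yosida approximation, It\^o's formula for $\|x\|^2$, quasi-dissipativity of the approximate generator, BDG plus Young to absorb the martingale term, Gr\"onwall), and the skeleton is sound; the factor $e^{4\alpha T}$ does come out exactly as you describe. Two points deserve more care than you give them. First, the Yosida semigroups do not satisfy $\|S^n_t\|\le e^{\alpha t}$ but only $\|S^n_t\|\le e^{\alpha_n t}$ with $\alpha_n=n\alpha/(n-\alpha)\downarrow\alpha$ (equivalently $\langle A_n x,x\rangle\le\alpha_n\|x\|^2$, and the inequality with $\alpha$ in place of $\alpha_n$ is genuinely false, e.g.\ for $A=\alpha I$); this is harmless since one obtains $\mathbf{C}e^{4\alpha_n T}$ and lets $n\to\infty$, but it should be said. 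Second, the step you label ``standard approximation results'' is the real content of the theorem: uniform-in-time $L^2$ convergence of $X^n$ to $X$ is essentially equivalent to the maximal inequality itself together with the existence of a c\`adl\`ag version of the convolution, so invoking it outright is circular. The non-circular way to close the argument is to first get the uniform bound for the $X^n$, then show $\mathbb{E}\|X^n_t-X_t\|^2\to0$ for each fixed $t$ (strong convergence $S^n_r x\to S_r x$ plus the covariance representation $\mathbb{E}\|\int\Phi\,dM\|^2=\mathbb{E}\int\|\Phi Q_s^{1/2}\|_{HS}^2\,d\langle M\rangle_s$ and dominated convergence, since $\Phi_n\to0$ strongly against a Hilbert--Schmidt factor), and finally pass to the supremum by Fatou over a countable dense set of times, using the c\`adl\`ag property of $X$ (which must be established separately, typically by the same approximation) to identify the countable supremum with the full one. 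With those repairs the proof is complete.
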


\begin{remark}
    Hamedani and Zangeneh~\cite{Hamedani-Zangeneh-stopped} generalized this inequality to a stopped maximal inequality for $p$-th moment ($0<p<\infty$) of stochastic convolution integrals.
\end{remark}

Because of the presence of monotone nonlinearity in our equation, we need a pathwise bound for stochastic convolution integrals. For this reason the following pathwise inequality for the norm of stochastic convolution integrals has been proved in Zangeneh~\cite{Zangeneh-Paper}.

\begin{theorem}[It\"o type inequality, Zangeneh~\cite{Zangeneh-Paper}]\label{theorem:ito type inequality}
    Let $Z_t$ be an $H$-valued c\`adl\`ag locally square integrable semimartingale. If
    \[ X_t=S_t X_0 + \int_0^t S_{t-s}dZ_s, \]
    then
    \begin{equation*}
        \lVert X_t \rVert ^2 \le e^{2\alpha t}\lVert X_0 \rVert ^2 + 2 \int_0^t {e^{2\alpha (t-s)}\langle X_{s-} , d Z_s \rangle}+\int_0^t {e^{2\alpha (t-s)}d[Z]_s},
    \end{equation*}
    where $[Z]_t$ is the quadratic variation process of $Z_t$.
\end{theorem}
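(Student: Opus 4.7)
The plan is to use the standard Yosida-approximation trick to reduce the problem to one where the semigroup is generated by a bounded operator, apply the classical It\^o formula for Hilbert-valued c\`adl\`ag semimartingales there, and then pass to the limit.

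First I would introduce the Yosida approximations $A_n = nAR(n,A)$ (for $n$ large enough that $n>\alpha$), set $S^n_t = e^{tA_n}$, and define $X^n_t = S^n_t X_0 + \int_0^t S^n_{t-s}\,dZ_s$. Since $A_n$ is bounded, $X^n$ genuinely satisfies the stochastic differential equation $dX^n_t = A_n X^n_t\,dt + dZ_t$ with $X^n_0 = X_0$, and is therefore an $H$-valued c\`adl\`ag semimartingale. The hypothesis $\|S_t\|\le e^{\alpha t}$ transfers to a dissipativity estimate $\langle x, A_n x\rangle \le \alpha_n \|x\|^2$ with $\alpha_n\to\alpha$ as $n\to\infty$ (in fact $\alpha_n = n\alpha/(n-\alpha)$).

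Next I would apply the It\^o formula for the real-valued semimartingale $\|X^n_t\|^2$ (or, more efficiently, for $e^{-2\alpha t}\|X^n_t\|^2$). Since the finite-variation part $A_n X^n_t\,dt$ contributes nothing to the quadratic variation, the quadratic-variation term equals $[Z]_t$. Collecting terms,
\begin{equation*}
  e^{-2\alpha t}\|X^n_t\|^2 = \|X_0\|^2 + 2\int_0^t e^{-2\alpha s}\bigl(\langle X^n_{s-}, A_n X^n_s\rangle - \alpha\|X^n_s\|^2\bigr)\,ds + 2\int_0^t e^{-2\alpha s}\langle X^n_{s-},dZ_s\rangle + \int_0^t e^{-2\alpha s}\,d[Z]_s.
\end{equation*}
The dissipativity estimate bounds the first integrand by $(\alpha_n-\alpha)\|X^n_s\|^2$, which is a vanishing error term. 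Multiplying through by $e^{2\alpha t}$ gives the desired inequality for $X^n_t$ up to the error $2(\alpha_n-\alpha)e^{2\alpha t}\int_0^t e^{-2\alpha s}\|X^n_s\|^2\,ds$.

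Finally I would pass to the limit $n\to\infty$. One needs $X^n_t \to X_t$ (pathwise or in probability), $\int_0^t e^{2\alpha(t-s)}\langle X^n_{s-},dZ_s\rangle \to \int_0^t e^{2\alpha(t-s)}\langle X_{s-},dZ_s\rangle$, and that the error term vanishes. The first convergence follows from the strong convergence $S^n_t\to S_t$ (uniformly on compacts of $t$ and strongly on $H$) applied to the integrand $S^n_{t-s}$ inside the convolution integral, together with dominated convergence for the deterministic part. The second convergence uses continuity of the stochastic integral with respect to its integrand in the semimartingale topology, which holds because the integrands $e^{2\alpha(t-s)}X^n_{s-}$ converge in probability uniformly in $s$. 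The main obstacle is precisely this passage to the limit for the stochastic integral against a general c\`adl\`ag locally square integrable semimartingale; one typically handles it by a standard localization, splitting $Z$ into its local martingale and finite-variation parts and invoking the It\^o isometry for the martingale piece and dominated convergence for the bounded-variation piece. Since $\alpha_n\to\alpha$ and $\|X^n_s\|$ stays locally bounded, the error term vanishes, yielding the claimed pathwise inequality.
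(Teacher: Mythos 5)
Your approach is sound and classical, but it is genuinely different from the route taken in this paper. First, note that the paper does not prove Theorem~\ref{theorem:ito type inequality} directly: it cites Zangeneh and instead proves the $p$th-power generalization (Theorem~\ref{theorem: Ito type inequality for pth power}), whose $p=2$ case recovers the statement. The paper's proof regularizes the \emph{data}: it keeps the semigroup $S$ fixed and replaces $Z$ and $X_0$ by $R(n)Z$ and $R(n)X_0$, where $R(n)=n(nI-A)^{-1}$, so that each approximant is a $D(A)$-valued stochastic convolution with the \emph{same} semigroup, shown (Lemma~\ref{lemma: D valued mild solution}) to be a strong solution; the ordinary It\^o formula then applies and the drift term is discarded exactly via $\langle Ax,x\rangle\le 0$. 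You instead regularize the \emph{generator}, replacing $A$ by its bounded Yosida approximation $A_n$ and $S$ by $e^{tA_n}$. What the paper's choice buys is a clean limit passage: $X^n-X$ is again a convolution of the fixed semigroup $S$ against $Z^n-Z$, so the Kotelenez and Burkholder-type maximal inequalities (Theorems~\ref{Theorem: Kotelenez inequality} and~\ref{theorem:Burkholder type inequality}) apply with uniform constants and give $\mathbb{E}\sup_s\|X^n_s-X_s\|^p\to 0$ directly (Steps 1--3 of the paper's proof). What your choice buys is that each $X^n$ solves an honest SDE with bounded drift, so you avoid the $D(A)$-Hilbert-space lemmas entirely; the price is the extra error term $2(\alpha_n-\alpha)e^{2\alpha t}\int_0^t e^{-2\alpha s}\|X^n_s\|^2\,ds$ (your dissipativity constant $\alpha_n=n\alpha/(n-\alpha)$ is correct) and a harder convergence argument.

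That convergence argument is the one place where your justification is too thin. The difference $\int_0^t\bigl(S^n_{t-s}-S_{t-s}\bigr)\,dM_s$ is a difference of convolutions with two \emph{different} semigroups, so it is not covered by the Kotelenez maximal inequality, and ``dominated convergence applied to the integrand inside the convolution'' is not a legitimate operation against a general square integrable martingale. Consequently the uniform-in-$s$ convergence in probability of $X^n$ to $X$, which you invoke to get continuity of the stochastic integral in the semimartingale topology, is not established by what you wrote. The fix is available and you half-sketch it: work at fixed $t$, localize, and for the martingale part use the It\^o isometry, which reduces the convergence of $\int_0^t e^{-2\alpha s}\langle X^n_{s-}-X_{s-},dM_s\rangle$ to $\mathbb{E}\int_0^t\|X^n_{s-}-X_{s-}\|^2\,d\langle M\rangle_s\to 0$; this needs only fixed-$s$ convergence (again from the isometry) plus domination, not a supremum bound. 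With that repair, and a uniform-in-$n$ bound on $\int_0^t\|X^n_s\|^2\,ds$ to kill the $(\alpha_n-\alpha)$ error term, your argument closes.
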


We state here the Burkholder-Davis-Gundy (BDG) inequality and a corollary to it, for future reference.

\begin{theorem}[Burkholder-Davis-Gundy (BDG) inequality] \label{theorem:BDG inequality}
    For every $p \ge 1$ there exists a constant $\mathcal{C}_p >0$ such that for any real valued square integrable cadlag martingale $M_t$ with $M_0=0$ and for any $T\ge0$,
        \[ \mathbb{E}\mathop{\sup}\limits_{0\le t\le T} |M_t|^p \le \mathcal{C}_p \mathbb{E} [M]_T^\frac{p}{2}. \]
\end{theorem}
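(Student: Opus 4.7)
The plan is to split the range of $p$ into $p\ge 2$ and $1\le p<2$, as the two call for genuinely different techniques. The base case $p=2$ is the easiest anchor: by Doob's $L^2$ maximal inequality, $\mathbb{E}\sup_{t\le T}|M_t|^2 \le 4\,\mathbb{E}|M_T|^2 = 4\,\mathbb{E}[M]_T$, so $\mathcal{C}_2=4$ works. From this anchor I would extend upwards by Itô and sideways by a good-$\lambda$ comparison.

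For $p>2$, I would apply Meyer's It\^o formula for c\`adl\`ag semimartingales to the $C^2$ function $\varphi(x)=|x|^p$, yielding
\[
|M_T|^p = p\int_0^T |M_{s-}|^{p-1}\operatorname{sgn}(M_{s-})\,dM_s + \tfrac{p(p-1)}{2}\int_0^T |M_{s-}|^{p-2}\,d[M]^c_s + \sum_{s\le T} R_s,
\]
with jump remainder $|R_s|\le C_p\bigl(|M_{s-}|^{p-2}+|\Delta M_s|^{p-2}\bigr)(\Delta M_s)^2$ by a Taylor estimate. After a standard localisation, taking expectations kills the stochastic integral and produces $\mathbb{E}|M_T|^p \le C_p\,\mathbb{E}\bigl[(M^*_T)^{p-2}[M]_T+[M]_T^{p/2}\bigr]$, where $M^*_T=\sup_{t\le T}|M_t|$ and I used $\sum|\Delta M_s|^p\le [M]_T^{p/2}$ (valid for $p\ge 2$). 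Applying H\"older with exponents $\bigl(p/(p-2),\,p/2\bigr)$ to the first term, Young's inequality to absorb the resulting $(M^*_T)^p$ factor, and Doob's $L^p$ maximal inequality $\mathbb{E}(M^*_T)^p\le (p/(p-1))^p\,\mathbb{E}|M_T|^p$ then closes the estimate.

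For $1\le p<2$ the It\^o approach is blocked by the non-smoothness of $|x|^p$ at the origin, so I would use the good-$\lambda$ method of Burkholder and Gundy. Introduce the stopping times $\sigma_\lambda=\inf\{t:|M_t|>\lambda\}$ and $\tau=\inf\{t:[M]_t^{1/2}>\delta\lambda\}$, and apply Doob's $L^2$ inequality to the stopped difference $M_{\cdot\wedge\tau}-M_{\sigma_\lambda\wedge\tau}$; this gives, for $\beta=2$ and small $\delta>0$,
\[
\mathbb{P}\bigl(M^*_T>\beta\lambda,\ [M]_T^{1/2}\le\delta\lambda\bigr) \le 4\delta^2\,\mathbb{P}\bigl(M^*_T>\lambda\bigr).
\]
Multiplying by $p\lambda^{p-1}$ and integrating over $\lambda\in(0,\infty)$ yields $\mathbb{E}(M^*_T)^p \le 4\beta^p\delta^2\,\mathbb{E}(M^*_T)^p + \beta^p\delta^{-p}\,\mathbb{E}[M]_T^{p/2}$; choosing $\delta$ small enough absorbs the first term into the left side.

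The main obstacle in both halves is the jump contribution of the c\`adl\`ag martingale: in the It\^o step one must verify the Taylor bound on $R_s$ and justify that $\sum R_s$ is integrable against $[M]$; in the good-$\lambda$ step one must control the overshoot $|\Delta M_{\sigma_\lambda}|$ at the first crossing of level $\lambda$, typically by a preliminary truncation of the large jumps and a separate treatment of their total contribution to $[M]_T$. These localisation and truncation arguments, though standard, are where the technical care of the proof resides.
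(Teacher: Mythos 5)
The paper does not prove this statement at all: its ``proof'' is a one-line citation to Peszat--Zabczyk, so any genuine argument you give is necessarily a different route. Your plan is the classical textbook one, and the $p\ge 2$ half is essentially complete and correct: Meyer's It\^o formula for $|x|^p$, the Taylor bound on the jump remainder, $\sum_s|\Delta M_s|^p\le [M]_T^{p/2}$, H\"older--Young absorption, and Doob's $L^p$ inequality close the loop, provided you first localize so that $\mathbb{E}(M_T^*)^p<\infty$ before absorbing the $\varepsilon\,\mathbb{E}(M_T^*)^p$ term (stopping when $|M|$ or $[M]$ exceeds $n$ and controlling the overshoot by $\mathbb{E}[M]_T^{p/2}$ does this). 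That half I would accept as written.

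The $1\le p<2$ half has a concrete gap that you name but do not close, and it is not a routine technicality. In the good-$\lambda$ step, on the event $\{[M]_T^{1/2}\le\delta\lambda\}$ all jumps are indeed $\le\delta\lambda$, so the overshoot at $\sigma_\lambda$ is controlled; the real problem is the second-moment bound for the stopped increment $N=M_{\cdot\wedge\tau}-M_{\sigma_\lambda\wedge\tau}$. You need $\mathbb{E}\bigl[[N]_\infty\mid\mathcal{F}_{\sigma_\lambda}\bigr]\le C\delta^2\lambda^2$, but $[N]_\infty\le[M]_\tau\le[M]_{\tau-}+(\Delta M_\tau)^2$, and while $[M]_{\tau-}\le\delta^2\lambda^2$ by definition of $\tau$, the jump $(\Delta M_\tau)^2$ at the crossing time of the quadratic variation is not controlled on the paths where $\tau\le T$ --- precisely the paths that contaminate the conditional expectation even though you only want the estimate on $\{\tau>T\}$. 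Fixing this requires decomposing $M$ into a part with suitably dominated jumps plus a bounded-variation part built from the large jumps (for $p=1$ this is exactly the Davis decomposition, and the $p=1$ case \emph{is} the Davis inequality, which is the hardest point of the whole theorem and is used in the paper via Corollary~\ref{corollary: BDG} with $p=1$). As it stands, the displayed inequality $\mathbb{P}(M_T^*>\beta\lambda,\ [M]_T^{1/2}\le\delta\lambda)\le 4\delta^2\,\mathbb{P}(M_T^*>\lambda)$ does not follow from the argument you give; you should either carry out the truncation/Davis decomposition explicitly or, as the paper does, delegate this case to the literature.
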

\begin{proof}
    See \cite{Peszat-Zabczyk}, page 37, and the reference there.
\end{proof}

\begin{corollary} \label{corollary: BDG}
    Let $p\ge 1$ and $\mathcal{C}_p$ be the constant in the BDG inequality and $M_t$ be an $H$-valued square integrable cadlag martingale and $X_t$ an $H$-valued adapted process and $T\ge 0$. Then for $K>0$,
    \begin{eqnarray*}
        \mathbb{E}\mathop{\sup}\limits_{0\le t\le T} \left|\int_0^t \langle X_s,dM_s\rangle\right|^p &\le& \mathcal{C}_p\mathbb{E}\left((X_t^*)^p[M]_t^\frac{p}{2}\right)\\
        &\le& \frac{\mathcal{C}_p}{2K} \mathbb{E}(X_t^*)^{2p} + \frac{\mathcal{C}_p K}{2} \mathbb{E}[M]_t^p.
    \end{eqnarray*}
    where $X_t^*=\mathbb{E}\mathop{\sup}\limits_{0\le t\le T} \|X_t\|$.
\end{corollary}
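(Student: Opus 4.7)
The plan is to reduce the statement to an application of the scalar BDG inequality (Theorem~\ref{theorem:BDG inequality}) followed by a routine weighted arithmetic--geometric (Young) inequality. Set $N_t := \int_0^t \langle X_s, dM_s\rangle$; this is a real-valued cadlag local martingale with $N_0=0$. In the Hilbert-space setting its quadratic variation admits the representation
\[
[N]_t \;=\; \int_0^t \|X_{s-}\|^2\, d[M]_s,
\]
which is the analogue of the familiar scalar formula and follows by polarizing the vector stochastic integral or equivalently from the isometry property; I would simply cite this as a standard fact about stochastic integrals against Hilbert-valued martingales.

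Next, I would apply the scalar BDG inequality to $N_t$ with exponent $p$, giving
\[
\mathbb{E}\sup_{0\le t\le T}|N_t|^p \;\le\; \mathcal{C}_p\,\mathbb{E}[N]_T^{p/2}
\;=\; \mathcal{C}_p\,\mathbb{E}\left(\int_0^T \|X_{s-}\|^2\, d[M]_s\right)^{p/2}.
\]
To obtain the first of the two displayed inequalities, bound $\|X_{s-}\|\le X_T^{*}$ inside the integral (where $X_T^{*}:=\sup_{0\le s\le T}\|X_s\|$ denotes the running supremum; the $\mathbb{E}$ in the stated definition appears to be a typographical slip), so that the integrand pulls out as $(X_T^{*})^2$ and the remaining integral is $[M]_T$. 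Raising to the power $p/2$ and taking expectations yields the first inequality $\mathcal{C}_p\,\mathbb{E}\bigl((X_T^{*})^p\,[M]_T^{p/2}\bigr)$.

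For the second inequality, I would apply Young's inequality in the form $ab\le \frac{a^2}{2K}+\frac{Kb^2}{2}$ (valid for all $a,b\ge 0$ and $K>0$) with $a=(X_T^{*})^p$ and $b=[M]_T^{p/2}$, then take expectations; this immediately produces the two terms $\frac{\mathcal{C}_p}{2K}\mathbb{E}(X_T^{*})^{2p}$ and $\frac{\mathcal{C}_p K}{2}\mathbb{E}[M]_T^{p}$.

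There is no real obstacle here: the only point that needs a moment of care is the identification of $[N]_t$ with $\int_0^t\|X_{s-}\|^2 d[M]_s$ in the vector-valued setting, which must be invoked rather than derived. Everything else is a direct application of BDG and a two-line Young inequality.
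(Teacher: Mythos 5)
The paper does not prove this corollary in-house; it simply cites Lemma~4 of Zangeneh's 1995 paper, and the argument you give (scalar BDG applied to the real-valued martingale $N_t=\int_0^t\langle X_s,dM_s\rangle$, followed by Young's inequality) is exactly the standard route one expects that reference to take, so your proposal is essentially the intended proof. One point deserves correction, though it does not affect the conclusion: in the Hilbert-space-valued setting the identity $[N]_t=\int_0^t\|X_{s-}\|^2\,d[M]_s$ is in general only an inequality $[N]_t\le\int_0^t\|X_{s-}\|^2\,d[M]_s$. The quadratic variation of $N$ is $\int_0^t\langle Q_s X_{s-},X_{s-}\rangle\,d[M]_s$, where $Q_s$ is the (nonnegative, trace-one) density of the tensor quadratic variation of $M$ with respect to its real quadratic variation $[M]$, and $\langle Q_s x,x\rangle\le\|x\|^2$ rather than being equal to it; equality would require $X_{s-}$ to be aligned with the top of the spectrum of $Q_s$. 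Since you only use this relation as an upper bound before applying BDG and then majorizing $\|X_{s-}\|$ by the running supremum, the argument survives verbatim with ``$\le$'' in place of ``$=$'', but you should not present it as an isometry-type identity. Your reading of $X_t^*$ as the running supremum $\sup_{0\le s\le T}\|X_s\|$ (with the $\mathbb{E}$ in the paper's definition being a typo) is the correct one, and the Young inequality step with $a=(X_T^*)^p$, $b=[M]_T^{p/2}$ reproduces the stated constants exactly.
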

\begin{proof}
    See~\cite{Zangeneh-Paper}, Lemma 4, page 147.
\end{proof}

We will need also the following inequality which is an analogous of Burkholder-Davies-Gundy inequality for stochastic convolution integrals.

\begin{theorem}[Burkholder Type Inequality, Zangeneh~\cite{Zangeneh-Paper}, Theorem 2, page 147]\label{theorem:Burkholder type inequality}
    Let $p\ge 2$ and $T>0$. Let $S_t$ be a contraction semigroup on $H$ and $M_t$ be an $H$-valued square integrable c\`adl\`ag martingale for $t\in[0,T]$. Then
    \begin{equation*}
        \mathbb{E}\sup\limits_{0 \le t \le T}\|{\int_0^t S_{t-s}dM_s}\|^p \le K_p \mathbb{E}([M]_T^\frac{p}{2})
    \end{equation*}
    where $K_p$ is a constant depending only on $p$.
\end{theorem}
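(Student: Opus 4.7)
The plan is to combine the pathwise It\^o type inequality (Theorem~\ref{theorem:ito type inequality}) with the BDG corollary (Corollary~\ref{corollary: BDG}) in a self-absorption argument. Writing $X_t=\int_0^t S_{t-s}\,dM_s$ and applying Theorem~\ref{theorem:ito type inequality} with $X_0=0$, $Z=M$, and $\alpha=0$ gives the pointwise bound
\[
\|X_t\|^2 \;\le\; 2 N_t + [M]_t, \qquad N_t := \int_0^t \langle X_{s-},\,dM_s\rangle,
\]
where $N_t$ is a real-valued local martingale. Writing $X^*_T := \sup_{0\le t\le T}\|X_t\|$, taking suprema over $t\in[0,T]$, raising to the $p/2$-power (using $(a+b)^{p/2}\le 2^{p/2-1}(a^{p/2}+b^{p/2})$ since $p/2\ge 1$), and taking expectations yields
\[
\mathbb{E}(X^*_T)^p \;\le\; 2^{p-1}\,\mathbb{E}\!\sup_{0\le t\le T}|N_t|^{p/2} \;+\; 2^{p/2-1}\,\mathbb{E}[M]_T^{p/2}.
\]

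The supremum of $|N_t|^{p/2}$ is then bounded via Corollary~\ref{corollary: BDG} (applied with the corollary's ``$p$'' equal to our $p/2$): for every $K>0$,
\[
\mathbb{E}\!\sup_{0\le t\le T}|N_t|^{p/2} \;\le\; \frac{\mathcal{C}_{p/2}}{2K}\,\mathbb{E}(X^*_T)^{p} \;+\; \frac{\mathcal{C}_{p/2}\,K}{2}\,\mathbb{E}[M]_T^{p/2}.
\]
Substituting and choosing $K$ large enough that $2^{p-1}\mathcal{C}_{p/2}/(2K)\le 1/2$ collapses the two inequalities to
\[
\mathbb{E}(X^*_T)^p \;\le\; \tfrac12\,\mathbb{E}(X^*_T)^p + K_p\,\mathbb{E}[M]_T^{p/2},
\]
and absorbing the $\tfrac12\mathbb{E}(X^*_T)^p$ term onto the left-hand side produces the claimed bound.

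The main obstacle is the legality of the absorption step: one needs $\mathbb{E}(X^*_T)^p<\infty$ a priori before rearranging. This is handled by a localization argument. One introduces the stopping times $\tau_n := \inf\{t\ge 0 : \|M_t\|\ge n\text{ or }[M]_t\ge n\}\wedge T$, runs the whole argument with $M$ replaced by the stopped martingale $M^{\tau_n}$ (for which the corresponding convolution $X^{(n)}_t = \int_0^t S_{t-s}\,dM^{\tau_n}_s$ has a finite $p$-th moment supremum by a crude estimate, e.g.\ combining Theorem~\ref{Theorem: Kotelenez inequality} with the essential boundedness of $[M^{\tau_n}]_T$), and then lets $n\to\infty$ with monotone convergence on both sides. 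The delicate point is that stochastic convolutions do not commute with optional stopping as cleanly as ordinary stochastic integrals; one must work with the convolution driven by the stopped martingale $M^{\tau_n}$ itself, to which Theorem~\ref{theorem:ito type inequality} directly applies, rather than with $X_{t\wedge\tau_n}$.
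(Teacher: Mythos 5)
The paper does not actually prove Theorem~\ref{theorem:Burkholder type inequality}; it imports it by citation from Zangeneh's 1995 paper (Theorem 2, p.~147), so there is no in-paper proof to compare against. That said, your main argument --- evaluate the pathwise It\^o type inequality (Theorem~\ref{theorem:ito type inequality}) with $X_0=0$, $\alpha=0$ to get $\|X_t\|^2\le 2N_t+[M]_t$, estimate $\mathbb{E}\sup_t|N_t|^{p/2}$ by Corollary~\ref{corollary: BDG} with exponent $p/2$, and absorb $\tfrac12\mathbb{E}(X_T^*)^p$ into the left side --- is exactly the argument the cited source is set up for (Corollary~\ref{corollary: BDG} is quoted from the very same page and is visibly tailored to this self-absorption), and the constant bookkeeping in your display is correct.

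The gap is in your localization. You stop $M$ at $\tau_n=\inf\{t:\|M_t\|\ge n\text{ or }[M]_t\ge n\}\wedge T$ and claim the a priori bound $\mathbb{E}\sup_t\|X^{(n)}_t\|^p<\infty$ follows from ``Theorem~\ref{Theorem: Kotelenez inequality} plus essential boundedness of $[M^{\tau_n}]_T$.'' Neither half holds: $[M^{\tau_n}]_T=[M]_{\tau_n^-}+\|\Delta M_{\tau_n}\|^2\le n+\|\Delta M_{\tau_n}\|^2$ and the overshoot $\|\Delta M_{\tau_n}\|$ is only square integrable, so $[M^{\tau_n}]_T$ is not essentially bounded; and even if it were, Kotelenez's inequality controls only the second moment of the supremum, which gives nothing about the $p$-th moment for $p>2$ --- that finiteness is essentially the theorem you are trying to prove. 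The standard repair keeps the localization on $X$ rather than on $M$, exploiting precisely the pathwise nature of Theorem~\ref{theorem:ito type inequality} that you were worried about losing: set $\sigma_m=\inf\{t:\|X_t\|\ge m\}$ and work with $Y_m:=\sup_{t<\sigma_m}\|X_t\|\le m$. Since the It\^o type inequality holds for every $t$ pathwise, for $t<\sigma_m$ one has $\|X_t\|^2\le 2N_{t\wedge\sigma_m}+[M]_T$, hence $Y_m^2\le 2\sup_t|N_{t\wedge\sigma_m}|+[M]_T$; moreover $[N^{\sigma_m}]_T\le Y_m^2\,[M]_T$ because $\sup_{s\le\sigma_m}\|X_{s-}\|\le Y_m$. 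Running your BDG/absorption step on $Y_m$ is now legitimate ($\mathbb{E}Y_m^p\le m^p<\infty$), gives $\mathbb{E}Y_m^p\le K_p\,\mathbb{E}[M]_T^{p/2}$ uniformly in $m$, and monotone convergence as $m\to\infty$ finishes the proof --- no control of the jump overshoot of $X$ or of $M$ is ever needed. With that substitution your proof is complete; as written, the a priori finiteness step would fail.
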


\section{It\"o Type Inequality for $p$th Power}\label{section: Ito type inequality for pth power}

We use the notion of semimartingale and It\"o's formula as is described in Metivier~\cite{Metivier}.

\begin{theorem}[It\"o type Inequality for $p$th power]\label{theorem: Ito type inequality for pth power}
    Let $p\ge 2$. Assume $Z(t)=V(t) + M(t)$ is a semimartingale where $V(t)$ is an $H$-valued process with finite variation $|V|(t)$ and $M(t)$ is an $H$-valued square integrable martingale with quadratic variation $[M](t)$. Assume that
        \[ \mathbb{E} [M](T)^{\frac{p}{2}} <\infty \qquad \qquad \mathbb{E}|V|(T)^p <\infty \]
    Let $X_0(\omega)$ be $\mathcal{F}_0$ measurable and square integrable. Define $X(t)=S(t) X_0 + \int_0^t S(t-s) dZ(s)$. Then we have
    \begin{eqnarray*}
        \|X(t)\|^p &\le& e^{p\alpha t}\|X_0\|^p + p\int_0^t e^{p\alpha (t-s)} \|X(s^-)\|^{p-2} \langle X(s^-) , dZ(s) \rangle \\
        && + \frac{1}{2}p(p-1)\int_0^t e^{p\alpha (t-s)} \|X(s^-)\|^{p-2} d[M]^c(s)\\
        && + \sum_{0\le s\le t} e^{p\alpha (t-s)} \left( \|X(s)\|^p -\|X(s^-)\|^p - p \|X(s^-)\|^{p-2} \langle X(s^-) , \Delta X(s) \rangle \right)
    \end{eqnarray*}

\end{theorem}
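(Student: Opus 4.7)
The strategy is to reduce to the contraction case, regularize the mild solution into a genuine semimartingale via a Yosida approximation, apply It\^o's formula in $H$ to $f(x)=\|x\|^p$, and pass to the limit.

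First I would reduce to $\alpha=0$. Setting $\tilde S_t:=e^{-\alpha t}S_t$, $\tilde X_t:=e^{-\alpha t}X_t$, and $d\tilde Z_s:=e^{-\alpha s}dZ_s$, one checks that $\tilde X_t=\tilde S_t X_0+\int_0^t \tilde S_{t-s}d\tilde Z_s$ with $\tilde S_t$ a contraction semigroup; multiplying the $\alpha=0$ inequality by $e^{p\alpha t}$ recovers the statement, since the various factors of $e^{-\alpha s}$ appearing in $\tilde X_{s^-}$, $d\tilde Z_s$ and $\Delta \tilde X_s$ combine to the stated weights $e^{p\alpha(t-s)}$. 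Next, with $\alpha=0$, let $A_n:=nA(nI-A)^{-1}$ be the Yosida approximants, $S^n_t:=e^{tA_n}$ the associated contraction semigroups, and $X^n(t):=S^n_t X_0+\int_0^t S^n_{t-s}dZ(s)$. Because $A_n$ is bounded, $X^n$ satisfies the integral equation $X^n(t)=X_0+\int_0^t A_n X^n(s)\,ds+Z(t)$ and is therefore a genuine $H$-valued semimartingale to which It\^o's formula applies.

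Then I would apply It\^o's formula from M\'etivier~\cite{Metivier} to $f(X^n(t))$ with $f(x)=\|x\|^p$, which is $C^2$ for $p\ge 2$ with $Df(x)=p\|x\|^{p-2}x$ and
\[ D^2f(x)(y,z)=p(p-2)\|x\|^{p-4}\langle x,y\rangle\langle x,z\rangle+p\|x\|^{p-2}\langle y,z\rangle. \]
Three simplifications produce the claim. (i) The drift term $p\int_0^t\|X^n(s^-)\|^{p-2}\langle X^n(s^-),A_n X^n(s^-)\rangle\,ds$ is non-positive by dissipativity of $A_n$ (since $S^n_t$ is a contraction), so it is dropped to get an inequality. (ii) The continuous second-order contribution contains the summand $\tfrac12 p(p-2)\|X^n(s^-)\|^{p-4}\langle X^n(s^-),d\langle\langle M^c\rangle\rangle_s\, X^n(s^-)\rangle$, which for $p\ge 2$ is bounded using $\langle x,Qx\rangle\le \|x\|^2\,\mathrm{tr}(Q)$ for positive trace-class $Q$; together with the $\tfrac12 p\|X^n(s^-)\|^{p-2}d[M]^c_s$ summand this produces exactly $\tfrac12 p(p-1)\|X^n(s^-)\|^{p-2}d[M]^c_s$. (iii) The jump sum in It\^o's formula matches the stated one after noting that $\Delta X^n(s)=\Delta Z(s)$.

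Finally I would pass to the limit $n\to\infty$. Strong convergence $S^n_t x\to S_t x$ locally uniformly in $t$, together with the integrability assumptions $\mathbb E[M]_T^{p/2}<\infty$ and $\mathbb E|V|_T^p<\infty$ and Theorem~\ref{theorem:Burkholder type inequality}, yields $X^n\to X$ in $L^p(\Omega;H)$ uniformly on $[0,T]$. The finite-variation, continuous-quadratic-variation and stochastic integral terms then converge by dominated convergence and the BDG inequality (Theorem~\ref{theorem:BDG inequality} and Corollary~\ref{corollary: BDG}). \textbf{The main obstacle} is the jump sum: each summand is non-negative by convexity of $\|\cdot\|^p$, so Fatou's lemma gives a lower bound on the limit, but to obtain the \emph{upper} bound that appears on the right-hand side I would exploit the identity already established for $X^n$ and pin the limit of the jump sum down by subtracting the limits of the other (convergent) terms from the left-hand side $\|X^n(t)\|^p\to \|X(t)\|^p$. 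A truncation of small jumps, if necessary, controls any residual integrability issue when the compensator of $Z$ is not finite.
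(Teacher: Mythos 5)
Your overall architecture (reduce to $\alpha=0$, regularize so that It\^o's formula applies, drop the dissipative drift, pass to the limit) matches the paper's, but you regularize the \emph{generator} (Yosida approximants $A_n$, semigroups $S^n_t=e^{tA_n}$, same driver $Z$), whereas the paper regularizes the \emph{data} (it keeps $S_t$ and replaces $Z$, $X_0$ by $R(n)Z$, $R(n)X_0$ with $R(n)=n(nI-A)^{-1}$, so that the approximating convolution is $D(A)$-valued). This difference is not cosmetic, and it is the source of the first genuine gap in your argument: to get $X^n\to X$ you must control $\int_0^t\bigl(S^n_{t-s}-S_{t-s}\bigr)\,dM(s)$, and Theorem~\ref{theorem:Burkholder type inequality} does not apply to this difference because $S^n_t-S_t$ is not a semigroup. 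Strong convergence $S^n_tx\to S_tx$ plus BDG does not by itself give uniform-in-$t$ convergence of the convolution against a general c\`adl\`ag martingale; you would need an additional argument (e.g.\ a finite-dimensional or simple-process approximation of $M$). In the paper's version the error term is $\int_0^t S_{t-s}\,d\bigl((R(n)-I)M\bigr)(s)$, a genuine stochastic convolution for the fixed semigroup $S$, so the Burkholder type inequality applies directly and convergence follows from $\mathbb{E}[(R(n)-I)M]_T^{p/2}\to 0$. That is exactly what your choice of regularization gives up.

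The second gap is your treatment of the jump sum. Your proposed device --- recover its limit by subtracting the limits of the other terms from $\|X^n(t)\|^p$ --- cannot work, because by that point you no longer have an identity: you have already discarded the drift term $p\int_0^t\|X^n(s)\|^{p-2}\langle X^n(s),A_nX^n(s)\rangle\,ds$, which is $\le 0$ but has no limit in general (since $X(s)\notin D(A)$), and you have replaced the second-order term by an upper bound. Fatou in the direction you mention gives the useless inequality. The correct argument (which is available in your setting, since $\Delta X^n(s)=\Delta Z(s)$ for all $n$) is the one the paper uses in its Step 8: bound each summand by $\tfrac12 p(p-1)\bigl(\|X^n(s^-)\|^{p-2}+\|X^n(s)\|^{p-2}\bigr)\|\Delta Z(s)\|^2$ via a Taylor-remainder lemma, note that $\sum_{0\le s\le t}\|\Delta Z(s)\|^2\le [Z](t)<\infty$ a.s.\ (so no truncation of small jumps is needed), extract a subsequence along which $\sup_s\|X^n(s)-X(s)\|\to 0$ a.s., and apply dominated convergence for series. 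With these two repairs your route would go through, but as written both the convergence of $X^n$ and the limit of the jump sum are unjustified.
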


\begin{remark}
    \begin{enumerate}
        \item For $p=2$ the theorem implies the Ito type inequality(Theorem~\ref{theorem:ito type inequality}).
        \item If $M$ is a continuous martingale then the inequality takes the simpler form
        \begin{eqnarray*}
            \|X(t)\|^p &\le& e^{p\alpha t}\|X_0\|^p + p\int_0^t e^{p\alpha (t-s)} \|X(s^-)\|^{p-2} \langle X(s^-) , dZ(s) \rangle \\
            && + \frac{1}{2}p(p-1)\int_0^t e^{p\alpha (t-s)} \|X(s^-)\|^{p-2} d[M](s)
        \end{eqnarray*}
    \end{enumerate}
\end{remark}

Before proceeding to the proof of theorem we state and prove some lemmas.

\begin{lemma}\label{lemma: alpha=0_Ito type inequality}
        It suffices to prove theorem~\ref{theorem:ito type inequality} for the case that $\alpha=0$.
    \end{lemma}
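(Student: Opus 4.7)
The plan is to renormalize the semigroup so that its growth bound becomes zero and then transfer the resulting inequality back to the original setting. Concretely, I would set $\tilde S_t := e^{-\alpha t} S_t$, which is again a $C_0$ semigroup (the semigroup law follows from $e^{-\alpha(t+s)}S_{t+s} = e^{-\alpha t}S_t\cdot e^{-\alpha s}S_s$) and now satisfies $\|\tilde S_t\|\le 1$, so the $\alpha=0$ form of Theorem~\ref{theorem: Ito type inequality for pth power} is applicable to it. In parallel I would define $\tilde Z(s) := \int_0^s e^{-\alpha u}\,dZ(u) = \tilde V(s) + \tilde M(s)$, with the analogous decomposition into a finite-variation and a square-integrable martingale part, and set $\tilde X(t) := e^{-\alpha t} X(t)$.

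The first step is to verify the identity
\[ \tilde X(t) = \tilde S_t X_0 + \int_0^t \tilde S_{t-s}\,d\tilde Z(s), \]
which is immediate from $e^{-\alpha t}S_{t-s} = \tilde S_{t-s}\cdot e^{-\alpha s}$ and the definition of $\tilde Z$. Thus $\tilde X$ is a stochastic convolution integral of exactly the form in the theorem, driven by a contraction semigroup. The integrability hypotheses on $\tilde M$ and $\tilde V$ inherit immediately from those on $M$ and $V$ because $e^{-\alpha u}$ is bounded on $[0,T]$.

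Assuming the $\alpha=0$ version of the inequality is established, I would apply it to $\tilde X$ and then unwind the substitution. Each summand on the right-hand side carries an explicit exponential factor in $s$: one has $\|\tilde X(s^-)\|^{p-2} = e^{-\alpha(p-2)s}\|X(s^-)\|^{p-2}$; the pairing $\langle\tilde X(s^-),d\tilde Z(s)\rangle$ contributes an additional $e^{-2\alpha s}$ (one factor from $\tilde X(s^-)$ and one from $d\tilde Z(s)=e^{-\alpha s}dZ(s)$); the continuous quadratic variation transforms as $d[\tilde M]^c(s) = e^{-2\alpha s}\,d[M]^c(s)$; and the jump correction at time $s$ scales by a common factor $e^{-p\alpha s}$ by $p$-homogeneity of the bracket $\|\cdot\|^p - \|\cdot\|^p - p\|\cdot\|^{p-2}\langle\cdot,\cdot\rangle$ in $\tilde X(s)$ and $\tilde X(s^-)$. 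Collecting, every term on the right picks up a uniform factor $e^{-p\alpha s}$, while the left-hand side is $e^{-p\alpha t}\|X(t)\|^p$; multiplying through by $e^{p\alpha t}$ converts each $e^{-p\alpha s}$ into $e^{p\alpha(t-s)}$ and reproduces the stated inequality.

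The main obstacle is purely bookkeeping: checking that the exponents $p-2$, $2$ and $p$ picked up respectively from $\|\tilde X\|^{p-2}$, from the pairing and quadratic variation, and from the jump correction combine consistently to the single weight $e^{p\alpha(t-s)}$ in every term. No new probabilistic input is needed beyond the observation that $\tilde M$ and $\tilde V$ remain a square-integrable martingale and a finite-variation process with the required moment bounds.
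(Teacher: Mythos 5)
Your proposal is correct and follows exactly the paper's own argument: rescale to $\tilde S(t)=e^{-\alpha t}S(t)$, $\tilde X(t)=e^{-\alpha t}X(t)$, $d\tilde Z(t)=e^{-\alpha t}dZ(t)$, check the convolution identity, apply the contraction-case inequality, and unwind. The only difference is that you carry out the exponent bookkeeping (each term acquiring the common factor $e^{-p\alpha s}$) that the paper dismisses with ``it is easy to see.''
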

\begin{proof} Define
        \begin{gather*}
           \tilde{S}(t)= e^{-\alpha t} S(t) ,\qquad \tilde{X}(t)=e^{-\alpha t}X(t) ,\\
           d\tilde{Z}(t)=e^{-\alpha t}dZ(t)
        \end{gather*}
        Now we have $d\tilde{X}(t)=\tilde{S}(t) X_0+\int_0^t \tilde(S)(t-s)d\tilde{Z}(s)$.
        Note that $\tilde{S}_t$ is a contraction semigroup. It is easy to see that the statement for $\tilde{X}_t$ implies the statement for $X(t)$.
\end{proof}

Hence from now on we assume $\alpha=0$.

\begin{lemma}[Ordinary It\"o's formula for $p$th power]\label{lemma: Ito's formula for pth power}
    Let $p\ge 2$ and assume that $Z(t)$ is an $H$-valued semimartingale. Then
    \[ \begin{array}{l} \|Z(t)\|^p \le \|Z(0)\|^p + p \int_0^t \|Z(s^-)\|^{p-2} \langle Z(s^-) , dZ(s) \rangle + \frac{p(p-1)}{2} \int_0^t \|Z(s^-)\|^{p-2} d[M]^c(s)\\ + \sum_{0\le s\le t} \left( \|Z(s)\|^p - \|Z(s^-)\|^p - p \|Z(s^-)\|^{p-2} \langle Z(s^-) , \Delta Z(s) \rangle \right) \end{array} \]
\end{lemma}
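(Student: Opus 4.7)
The plan is to apply the It\^o formula for $H$-valued semimartingales (as in Metivier) to the function $F\colon H\to\mathbb{R}$, $F(x)=\|x\|^p$, along the path of $Z$. The first step is to verify that $F\in C^2(H)$ for $p\ge 2$. On $H\setminus\{0\}$ one computes $F'(x)h=p\|x\|^{p-2}\langle x,h\rangle$ and
\[
F''(x)(h,k)=p\|x\|^{p-2}\langle h,k\rangle+p(p-2)\|x\|^{p-4}\langle x,h\rangle\langle x,k\rangle,
\]
and the bound $\|F''(x)\|_{\mathrm{op}}\le p(p-1)\|x\|^{p-2}$ shows that $F''$ extends continuously to $0$ (to $2I$ if $p=2$ and to $0$ if $p>2$). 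Moreover, since $\langle x,h\rangle^2\le\|x\|^2\|h\|^2$, one obtains the pointwise operator inequality
\[
F''(x)\preceq p(p-1)\|x\|^{p-2}\,I
\]
valid on all of $H$.

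It\^o's formula applied to $F$ and $Z$ then yields the identity
\[
\|Z(t)\|^p=\|Z(0)\|^p+\int_0^t F'(Z_{s^-})\,dZ_s+\tfrac12\int_0^t \operatorname{tr}\bigl(F''(Z_{s^-})\,dQ^c_s\bigr)+\sum_{0<s\le t}\bigl(\|Z_s\|^p-\|Z_{s^-}\|^p-F'(Z_{s^-})\Delta Z_s\bigr),
\]
where $Q^c$ is the operator-valued continuous part of the quadratic variation of the martingale component of $Z$, whose trace is the scalar $[M]^c$. Substituting the explicit formula for $F'$ reproduces verbatim the first stochastic integral and the final jump sum in the statement.

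The only place an inequality enters is in the trace term. Because $dQ^c_s$ is positive-operator-valued and the trace is monotone against positive operators (if $A\preceq B$ then $\operatorname{tr}(AC)\le\operatorname{tr}(BC)$ for positive trace-class $C$, via $\operatorname{tr}(C^{1/2}(B-A)C^{1/2})\ge 0$), the operator bound on $F''$ gives
\[
\operatorname{tr}\bigl(F''(Z_{s^-})\,dQ^c_s\bigr)\le p(p-1)\|Z_{s^-}\|^{p-2}\,d[M]^c(s),
\]
producing the $\tfrac{p(p-1)}{2}\int_0^t\|Z_{s^-}\|^{p-2}\,d[M]^c(s)$ term.

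The main subtlety I anticipate is the regularity of $F$ at the origin: for $p\in[2,4)$ the raw expression for $F''$ contains the negative power $\|x\|^{p-4}$, so one must check that, after combining the two Hessian contributions, $F''$ is genuinely continuous at $0$ (which the operator-norm estimate above confirms) so that Metivier's $C^2$-It\^o formula is actually applicable. Once this technicality is in hand, the rest of the argument is a direct consequence of It\^o's formula together with the elementary Cauchy--Schwarz-based domination of the Hessian.
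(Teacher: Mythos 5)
Your proposal is correct and follows essentially the same route as the paper: apply Metivier's It\^o formula for $H$-valued semimartingales to $\varphi(x)=\|x\|^p$, compute $\varphi'$ and $\varphi''$, and dominate the Hessian contribution by $p(p-1)\|x\|^{p-2}\|h\|^2$ via Cauchy--Schwarz. The extra care you take with the $C^2$ regularity at the origin for $p\in[2,4)$ and with passing from the operator bound on $\varphi''$ to the scalar $d[M]^c$ term is exactly the detail the paper leaves implicit.
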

\begin{proof}
    Use It\"o's formula (Metivier~\cite{Metivier}, Theorem 27.2, Page 190) for $\varphi(x)=\|x\|^p$ and note that
    \[\varphi'(x)(h)=p\|x\|^{p-2}\langle x , h\rangle,\]
    \[ \varphi''(x)(h\otimes h)=\frac{1}{2}p(p-2) \|x\|^{p-4}\langle x , h\rangle \langle x , h \rangle + \frac{1}{2}p\|x\|^{p-2}\langle h , h \rangle \le \frac{1}{2}p(p-1) \|x\|^{p-2} \|h\|^2\]
\end{proof}

\begin{lemma}\label{lemma: D valued deterministic mild solution}
    Assume $v:[0,T]\to D(A)$ is a function with finite variation (with respect to the norm of $D(A)$) denoted by $|v|(t)$. Assume that $u_0 \in D(A)$. Let $u(t)=S(t) u_0 + \int_0^t S(t-s) dv(s)$. Then $u(t)$ is $D(A)$-valued and satisfies
        \[  u(t)=u_0+\int_0^t A u(s) ds + v(t)\]
\end{lemma}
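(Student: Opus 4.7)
The plan is to proceed in two main steps: first, establish that $u(t)\in D(A)$ for every $t$ together with an explicit formula for $Au(t)$; then apply Fubini's theorem to $\int_0^t Au(s)\,ds$ and read off the desired identity.

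For the first step, $S(t)u_0\in D(A)$ since $u_0\in D(A)$ and $S(t)$ preserves $D(A)$. For $I(t):=\int_0^t S(t-s)\,dv(s)$, I would approximate $v$ by step functions $v_n:[0,T]\to D(A)$ converging to $v$ in the variation norm of $D(A)$ (such a sequence exists because $v$ has finite variation with respect to $\|\cdot\|_{D(A)}$). For a step function the integral is a finite sum on which $A$ acts term by term, giving $AI(v_n)(t)=\int_0^t S(t-s)\,d(Av_n)(s)$. Because $A$ is bounded from $(D(A),\|\cdot\|_{D(A)})$ into $H$, the function $Av$ has finite variation in $H$, and both $I(v_n)(t)\to I(v)(t)$ and $\int_0^t S(t-s)\,d(Av_n)(s)\to \int_0^t S(t-s)\,d(Av)(s)$ in $H$. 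Closedness of $A$ then yields $I(t)\in D(A)$ with $AI(t)=\int_0^t S(t-s)\,d(Av)(s)=\int_0^t AS(t-s)\,dv(s)$. In particular $Au(\cdot)$ is uniformly bounded on $[0,T]$, hence integrable.

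For the second step, I would interchange the order of integration and invoke the operator identity $\int_r^t AS(s-r)\,ds = S(t-r)-I$ (which is the operator form of $A\int_0^\tau S(\sigma)\,d\sigma=S(\tau)-I$, valid on all of $H$):
\begin{align*}
\int_0^t Au(s)\,ds
&= \int_0^t AS(s)u_0\,ds + \int_0^t\!\!\int_0^s AS(s-r)\,dv(r)\,ds\\
&= \bigl(S(t)-I\bigr)u_0 + \int_0^t\!\!\int_r^t AS(s-r)\,ds\,dv(r)\\
&= S(t)u_0 - u_0 + \int_0^t\bigl(S(t-r)-I\bigr)dv(r)\\
&= u(t) - u_0 - \bigl(v(t)-v(0)\bigr).
\end{align*}
Under the convention $v(0)=0$ (which is implicit, since $u(0)=u_0$ forces it), this is precisely the claimed identity.

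The main technical obstacle is justifying the two interchanges of $A$ with Stieltjes integrals and the Fubini step under a mere finite-variation hypothesis rather than Bochner absolute integrability in $D(A)$. Both are handled by the step-function approximation combined with closedness of $A$; the crucial observation is that the graph-norm finite-variation hypothesis on $v$ makes $Av$ a bona fide $H$-valued function of finite variation, so every Stieltjes integral in the argument is unambiguously defined and well-behaved.
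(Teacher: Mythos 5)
Your overall architecture is sound and is essentially the paper's argument read in the opposite direction: the paper expands $S(t)u_0$ and $S(t-s)$ via the integrated-generator identities and then applies Fubini ``forwards'' to land on $u_0+v(t)+\int_0^t Au(r)\,dr$, whereas you first identify $Au(t)$ and then integrate, using Fubini and $\int_r^t AS(s-r)\,ds=S(t-r)-I$ ``backwards''. The Fubini step and the final computation are correct (with the harmless caveat that $\int_r^t AS(s-r)x\,ds=(S(t-r)-I)x$ with $A$ \emph{inside} the integral needs $x\in D(A)$, which holds here since $v$ is $D(A)$-valued).

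There is, however, a genuine gap in your first step. You assert that a function $v$ of finite variation in $D(A)$ can be approximated by step functions $v_n$ \emph{in the variation norm}. This is false: if $v$ is continuous and nonconstant (e.g.\ $v(t)=t\,x_0$ for a fixed $x_0\in D(A)$) and $v_n$ is any step function, then on each interval of constancy of $v_n$ the variation of $v-v_n$ equals that of $v$, so $|v-v_n|(T)\ge |v|(T)>0$ for all $n$. Step functions approximate finite-variation functions uniformly, not in variation. Since this approximation is your sole justification for the key interchange $A\int_0^t S(t-s)\,dv(s)=\int_0^t S(t-s)\,d(Av)(s)$, the step as written does not go through. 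The interchange itself is true and can be repaired in either of two standard ways: (i) replace step-function approximants by Riemann--Stieltjes sums $\sum_i S(t-s_i)(v(s_{i+1})-v(s_i))$, which do converge to the integral (by strong continuity of $S$ and finite variation of $v$), commute with $A$ termwise because $v$ is $D(A)$-valued, and pass to the limit by closedness of $A$; or (ii) do what the paper does: write $v(t)=\int_0^t q(s)\,d|v|(s)$ with a $D(A)$-valued Radon--Nikodym density $q$, view $\int_0^t S(t-s)q(s)\,d|v|(s)$ as a Bochner integral in the Hilbert space $(D(A),\langle\cdot,\cdot\rangle_{D(A)})$, and use that $A$ is a \emph{bounded} operator from this space to $H$, so it commutes with Bochner integrals. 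With either repair your proof is complete.
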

\begin{proof}
    (see also Curtain and Pritchard page 30 Theorem 2.22 for the special case $dv(t)=f(t)dt$.)
    Let $q(t)$ be the Radon-Nikodym derivative of $v(t)$ with respect to $|v|(t)$, i.e, $q(t)$ is a $D(A)$-valued function which is Bochner measurable with respect to $d|v|(t)$ and $v(t)=\int_0^t q(s) d|v|(s)$. We know that for every $t\in [0,T]$, $\|q(t)\|\le 1$.

	Recall from semigrop theory that one can equip $D(A)$ with an inner product by defining $\langle x , y \rangle_{D(A)} := \langle x , y \rangle + \langle Ax , Ay \rangle$. By closedness of $A$ it follows that under this inner product $D(A)$ is a Hilbert space and $A:D(A)\to H$ is a bounded linear map. Note that $S(t)$ is also a semigroup on $D(A)$. Hence $u(t)$ is a convolution integral in $D(A)$ and hence has it's value in $D(A)$. We use the following two simple identities that hold in $D(A)$:
        \[  S(t)x = x + \int_0^t A S(r) x dr,\qquad S(t-s)x = x +  \int_s^t S(r-s)Ax dr \]
    We have
        \[ \begin{array}{l l}
            u(t) & = S(t) u_0 + \int_0^t S(t-s) dv(s)\\
                 & = S(t) u_0 + \int_0^t S(t-s) q(s) d|v|(s)\\
                 & = u_0 + \int_0^t A S(r) u_0 dr + \int_0^t \left( q(s) + A \int_s^t S(r-s)q(s) dr\right) d|v|(s)\\
            \end{array} \]
    Now using Fubini's theorem we find
        \[ \begin{array}{l l}
                 & = u_0 + \int_0^t q(s) d|v|(s) + \int_0^t A S(r) u_0 dr + \int_0^t A \int_0^r S(r-s)q(s)d|v|(s) dr\\
                 & = u_0 + v(t) + \int_0^t A \left( S(r) u_0 dr + \int_0^r S(r-s)dv(s) dr\right)\\
                 & = u_0 + v(t) + \int_0^t A u(r) dr.
            \end{array} \]
\end{proof}

\begin{lemma}\label{lemma: D valued mild solution}
    Assume $V(t)$ is a $D(A)$-valued process with finite variation in $D(A)$ and $M(t)$ is a $D(A)$-valued square integrable martingale and $V(0)=M(0)=0$. Let $Z(t)=V(t) + M(t)$ and let $X_0$ be $D(A)$-valued and $\mathcal{F}_0$-measurable and define $X(t)=S(t) X_0 + \int_0^t S(t-s) dZ(s)$. Then $X(t)$ is $D(A)$-valued and satisfies the following stochastic integral equation in $H$:
        \[  X(t)= S(t) X_0 + \int_0^t A X(s) ds + Z(t)\]
\end{lemma}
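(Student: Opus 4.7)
The plan is to decompose $X(t)$ into three pieces, handle each separately, and then recombine. Write $X(t) = S(t)X_0 + Y^V(t) + Y^M(t)$ where $Y^V(t) = \int_0^t S(t-s)dV(s)$ and $Y^M(t) = \int_0^t S(t-s)dM(s)$. The deterministic piece $S(t)X_0$ is $D(A)$-valued (since $X_0 \in D(A)$ and $S(t)$ preserves $D(A)$) and satisfies $S(t)X_0 = X_0 + \int_0^t AS(r)X_0 \, dr$. The finite-variation piece $Y^V(t)$ is handled pathwise by Lemma~\ref{lemma: D valued deterministic mild solution} applied $\omega$ by $\omega$: for almost every $\omega$, $V(\cdot,\omega)$ is a $D(A)$-valued function of finite variation in $D(A)$, so $Y^V(t,\omega)$ is $D(A)$-valued and $Y^V(t) = \int_0^t A Y^V(r)\, dr + V(t)$.

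The real work is the martingale piece $Y^M(t)$. As recalled in Lemma~\ref{lemma: D valued deterministic mild solution}, $D(A)$ is itself a Hilbert space under $\langle\cdot,\cdot\rangle_{D(A)}$, and $S(t)$ restricts to a $C_0$-semigroup on this space; moreover $M$ is a square integrable $D(A)$-valued martingale. Hence $Y^M(t)$ is a stochastic convolution integral \emph{inside} $D(A)$ and is $D(A)$-valued. To derive the integral equation, I would use the identity
\[
S(t-s)x \;=\; x + \int_s^t A S(r-s) x \, dr, \qquad x \in D(A),
\]
inside the stochastic integral defining $Y^M(t)$. This yields
\[
Y^M(t) \;=\; M(t) + \int_0^t \!\!\int_s^t A S(r-s)\, dr \, dM(s).
\]

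Next I would apply a stochastic Fubini theorem to swap the $dr$ and $dM(s)$ integrations, getting
\[
Y^M(t) \;=\; M(t) + \int_0^t \!\!\int_0^r A S(r-s)\, dM(s)\, dr.
\]
Because $A$ is bounded from $D(A)$ to $H$ (by closedness of $A$), it pulls through the stochastic integral:
\[
\int_0^r A S(r-s)\, dM(s) \;=\; A \int_0^r S(r-s)\, dM(s) \;=\; A Y^M(r).
\]
Thus $Y^M(t) = M(t) + \int_0^t A Y^M(r)\, dr$. Summing the three identities for $S(t)X_0$, $Y^V$, and $Y^M$ gives
\[
X(t) \;=\; X_0 + \int_0^t A X(r)\, dr + V(t) + M(t) \;=\; X_0 + \int_0^t A X(r)\, dr + Z(t),
\]
which is the desired equation (modulo the $S(t)X_0$ versus $X_0$ notation in the statement, the difference being absorbed into $\int_0^t A X(r)\, dr$).

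The main obstacle is the stochastic Fubini step together with the commutation of $A$ with the stochastic integral. The former requires an integrability estimate for $\|AS(r-s)\|_{\mathrm{op}}$ on $[0,T]^2$, which holds because we work in the $D(A)$-norm where $A$ is bounded; the latter is standard via the closedness of $A$ and approximation by simple integrands. Once these two analytic facts are in hand, the rest is bookkeeping using Lemma~\ref{lemma: D valued deterministic mild solution}.
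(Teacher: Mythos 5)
Your proposal is correct and follows essentially the same route as the paper: the same decomposition into the finite-variation part (handled pathwise by Lemma~\ref{lemma: D valued deterministic mild solution}) and the martingale convolution viewed inside the Hilbert space $D(A)$, the same semigroup identity, and a stochastic Fubini argument to produce $\int_0^t A Y^M(r)\,dr$. The only divergence is in how the Fubini step is licensed: you apply it directly to the operator-valued integrand and commute the bounded map $A:D(A)\to H$ through the stochastic integral, whereas the paper first projects $M$ onto finite-dimensional subspaces spanned by a basis of $D(A)$, derives the identity for each projection, and passes to the limit via Kotelenez's maximal inequality (Theorem~\ref{Theorem: Kotelenez inequality}) --- a technical detour that serves precisely to justify the step you label ``standard.''
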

\begin{proof}[Proof of Lemma]
    Note that $S(t)$ is also a semigroup on $D(A)$. Hence $X(t)$ is a stochastic convolution integral in $D(A)$ and hence has it's value in $D(A)$. Write $\overline{Y}(t)=S(t)X_0+\int_0^t S(t-s)dV(s)$ and $Y(t)=\int_0^t S(t-s) dM(s)$. Hence $X(t)=\overline{Y}(t)+Y(t)$. We can apply lemma~\ref{lemma: D valued deterministic mild solution} to term $\overline{Y}(t)$ and deduce $\overline{Y}(t)=X_0+\int_0^t A \overline{Y}(s) ds + V(t)$. Hence it suffices to prove $Y(t)=\int_0^t A Y(s) ds + M(t)$.

    Let $\{e_1,e_2,e_3,\ldots\}$ be a basis for Hilbert space $D(A)$. Define $\overline{M}^j(t)=\langle M(t),e_j \rangle$ and $M^k(t)=\sum_{j=1}^k \overline{M}^j(t)$. Let $Y^k(t)=\int_0^t S(t-s) dM^k(s)$. We use the following two simple identities that hold in $D(A)$:
        \[  S(t)x = x + \int_0^t A S(r) x dr,\qquad S(t-s)x = x + A \int_s^t S(r-s)x dr \]
    We have
        \[ \begin{array}{l l}
            Y^k(t)  & = \int_0^t S(t-s) dM^k(s)\\
                    & = \sum_1^k \int_0^t S(t-s) e_j d\overline{M}^j(s)\\
                    & = \sum_1^k \int_0^t\left( e_j + \int_s^t S(r-s)A e_j dr \right) d\overline{M}^j(s)\\
                    & = M^k(t) + \int_0^t \int_s^t S(r-s)A e_j dr d\overline{M}^j(s)\\
            \end{array} \]
    Now using stochastic Fubini theorem (see~\cite{Peszat-Zabczyk} Theorem 8.14 page 119) we find
        \[ \begin{array}{l l}
                    & = M^k(t) + \int_0^t \int_0^r S(r-s)A e_j d\overline{M}^j(s) dr\\
                    & = M^k(t) + \int_0^t A \left( \int_0^r S(r-s)d\overline{M}^j(s)\right)dr\\
                    & = M^k(t) + \int_0^t A Y^k(s) ds.
            \end{array} \]
    Hence we find
    \begin{equation}\label{equation: proof of lemma: D valued mild solution}
        Y^k(t)=M^k(t) + \int_0^t A Y^k(s) ds
    \end{equation}
    We have $\mathbb{E}\|M(T)-M^k(T)\|_{D(A)}^2\to 0$ and by Theorem~\ref{Theorem: Kotelenez inequality}
        \[ \mathbb{E} \sup_{0\le t\le T} \|Y(t)-Y^k(t)\|_{D(A)}^2 \le \mathbf{C} \mathbb{E}\|M(T)-M^k(T)\|_{D(A)}^2 \to 0\]
    and since $A:D(A)\to H$ is continuous $\mathbb{E} \sup_{0\le t\le T} \|AY(t)-AY^k(t)\|_H^2\to 0$ and hence $\mathbb{E}\|\int_0^t A Y(s) ds - \int_0^t A Y^k(s) ds\| \to 0$. Hence by taking limits from both sides of~\eqref{equation: proof of lemma: D valued mild solution} we get
        \[ Y(t)=M(t) + \int_0^t A Y(s) ds. \]
\end{proof}

\begin{proof}[Proof of Theorem~\ref{theorem: Ito type inequality for pth power}]
    By using Lemma~\ref{lemma: alpha=0_Ito type inequality}, we need only to prove for the case $\alpha=0$. In this case we have to prove
    \begin{equation}\begin{array}{l l}\label{equation: proof of Ito type inequality pth power 1}
        \|X(t)\|^p \le& \|X_0\|^p + p\int_0^t \|X(s^-)\|^{p-2} \langle X(s^-) , dZ(s) \rangle + \frac{1}{2}p(p-1)\int_0^t \|X(s^-)\|^{p-2} d[M]^c(s)\\
        & + \sum_{0\le s\le t} \left( \|X(s)\|^p -\|X(s^-)\|^p - p \|X(s^-)\|^{p-2} \langle X(s^-) , \Delta X(s) \rangle \right).
    \end{array} \end{equation}
    
    The main idea is that we approximate $M(t)$ and $V(t)$ by some $D(A)$ valued processes, and for $D(A)$ valued processes we use ordinary It\"o's formula. This is done by Yosida approximations. We recall some facts from semigroup theory in the following lemma. For proofs see Pazy~\cite{Pazy}.
    
\begin{lemma}\label{lemma: properties of yosida approximations}
    For $\lambda > 0$, $\lambda I - A$ is invertible. Let $R(\lambda)=\lambda(\lambda I - A)^{-1}$ and $A(\lambda)=A R(\lambda)$. We have:
        \begin{description}
          \item[(a)] $R(\lambda): H \to D(A)$ and $A(\lambda):H \to H$ are bounded linear maps.
          \item[(b)] for every $x\in H$, $\|R(\lambda)x\|_H \le \|x\|_H$ and $\langle x , A(\lambda) x \rangle \le 0$.
          \item[(c)] $R(\lambda)S(t)=S(t)R(\lambda)$ and for $x\in D(A)$, $R(\lambda) A x = A R(\lambda) x$.
          \item[(d)] for every $x\in H$, $\lim_{\lambda\to \infty} R(\lambda) x = x$ in $H$.
          \item[(e)] for every $x\in D(A)$, $\lim_{\lambda\to \infty} A(\lambda) x = Ax$.
        \end{description}
\end{lemma}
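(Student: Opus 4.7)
The plan is to build everything from the Laplace-transform representation of the resolvent. Since we are in the contraction case ($\alpha=0$), for every $\lambda>0$ the Bochner integral
$$R_\lambda x := \int_0^\infty e^{-\lambda t} S(t) x \, dt$$
converges absolutely with $\|R_\lambda x\| \le \|x\|/\lambda$. The first step is to verify that $R_\lambda x \in D(A)$ with $(\lambda I - A) R_\lambda x = x$ for every $x \in H$, and $R_\lambda(\lambda I - A) x = x$ for $x \in D(A)$. These are obtained by computing $h^{-1}(S(h)-I) R_\lambda x$ through a change of variables in the integral and letting $h \to 0^+$. This establishes the invertibility of $\lambda I - A$ with $(\lambda I - A)^{-1} = R_\lambda$, gives the useful algebraic identity
$$A(\lambda) = A R(\lambda) = \lambda R(\lambda) - \lambda I,$$
and yields (a) at once: $R(\lambda)=\lambda R_\lambda$ is bounded from $H$ into $D(A)$, and the displayed identity shows $A(\lambda)$ is bounded from $H$ to $H$.

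From this point, (b) and (c) are short algebraic consequences. The contractivity $\|R(\lambda) x\| \le \|x\|$ in (b) follows from the integral formula together with $\int_0^\infty \lambda e^{-\lambda t} dt = 1$, and the dissipativity drops out of
$$\langle x, A(\lambda) x \rangle = \lambda \langle x, R(\lambda) x \rangle - \lambda \|x\|^2 \le \lambda \|R(\lambda) x\|\,\|x\| - \lambda \|x\|^2 \le 0$$
by Cauchy--Schwarz. For (c), commutation with $S(t)$ is immediate by pulling $S(t)$ through the integral defining $R_\lambda$ and using $S(s)S(t)=S(t)S(s)$; the identity $R(\lambda) A x = A R(\lambda) x$ for $x \in D(A)$ follows by showing both sides equal $\lambda R(\lambda) x - \lambda x$, the right side from the key identity above and the left side by writing $Ax = \lambda x - (\lambda I - A)x$ and applying $(\lambda I - A)^{-1}$.

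The only genuinely nontrivial step is (d). For $x \in D(A)$, rearranging gives $R(\lambda) x - x = (\lambda I - A)^{-1} A x$, so $\|R(\lambda) x - x\| \le \|Ax\|/\lambda \to 0$. For general $x \in H$, I would combine the uniform bound $\|R(\lambda)\| \le 1$ from (b) with the density of $D(A)$ in $H$ (a standard $C_0$-semigroup fact proved independently by noting $t^{-1}\int_0^t S(s) x \, ds \in D(A)$ and converges to $x$) via a routine $\varepsilon/3$ approximation. Part (e) is then a one-liner: for $x \in D(A)$, combining (c) and (d) gives $A(\lambda) x = A R(\lambda) x = R(\lambda) A x \to A x$. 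The main obstacle is really the opening computation of $A R_\lambda x$ from the integral formula — the bookkeeping of the difference quotient and the dominated-convergence step — but this is a classical computation carried out in Pazy, to which the paper already defers for the detailed verification.
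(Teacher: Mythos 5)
Your proof is correct, and it is precisely the classical Hille--Yosida resolvent argument --- the Laplace-transform representation $R_\lambda x=\int_0^\infty e^{-\lambda t}S(t)x\,dt$ for the contraction case $\alpha=0$, the key identity $A(\lambda)=\lambda R(\lambda)-\lambda I$, dissipativity via Cauchy--Schwarz, convergence on $D(A)$ at rate $\|Ax\|/\lambda$, and extension to all of $H$ by the uniform bound plus density of $D(A)$ --- which is exactly the argument in Pazy, to which the paper defers without giving any proof of its own. Since you take essentially the same route as the paper's cited source, there is nothing further to compare.
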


    Now for $n=1,2,3,\ldots$ Define:
    \[  V^n(t)=R(n)V(t),\qquad M^n(t)=R(n)M(t),\qquad Z^n(t)=V^n(t)+M^n(t)=R(n)Z(t)\]
    \[  X^n_0= R(n) X_0,\qquad X^n(t)=S(t) X^n_0 + \int_0^t S(t-s) dZ^n(s) \]
    According to Lemma~\ref{lemma: properties of yosida approximations}, $V^n(t)$ is a $D(A)$-valued finite variation process, $M^n(t)$ is a $D(A)$-valued martingale and $Z^n(t)$ is a $D(A)$-valued semimartingale. Hence by lemma~\ref{lemma: D valued mild solution}, $X^n(t)$ is an ordinary stochastic integral and hence we can apply Lemma~\ref{lemma: Ito's formula for pth power} to it and find
    \[ \begin{array}{l}
        \|X^n(t)\|^p \le \|X^n_0\|^p + p \int_0^t \|X^n(s^-)\|^{p-2} \langle X^n(s^-) , A X^n(s) ds + dV^n(s) + dM^n(s) \rangle\\
        + \frac{p(p-1)}{2} \int_0^t \|X^n(s^-)\|^{p-2} d[M^n]^c(s)+F^n
    \end{array} \]
    where
    \[ F^n = \sum_{0\le s\le t} \left( \|X^n(s)\|^p - X^n(s^-)^p - p \|X^n(s^-)\|^{p-2} \langle X^n(s^-) , \Delta Z^n(s) \rangle \right).\]
    Since $A$ is the generator of a contraction semigroup, we have $\langle A x , x \rangle \le 0$, hence we find
    \begin{equation}\label{equation: proof of Ito type inequality pth power 3} \begin{array}{l l}
        \underbrace{\|X^n(t)\|^p}_\mathbf{A^n} \le & \underbrace{\|X^n_0\|^p}_\mathbf{B^n} + p \underbrace{\int_0^t \|X^n(s^-)\|^{p-2} \langle X^n(s^-) , dV^n(s)\rangle}_\mathbf{C^n}\\
        & + p \underbrace{\int_0^t \|X^n(s^-)\|^{p-2} \langle X^n(s^-) , dM^n(s) \rangle}_\mathbf{D^n}\\
        & + \frac{p(p-1)}{2} \underbrace{\int_0^t \|X^n(s^-)\|^{p-2} d[M^n]^c(s)}_\mathbf{E^n}+F^n.
    \end{array} \end{equation}
    We claim that the inequality~\eqref{equation: proof of Ito type inequality pth power 3} (after choosing a suitable subsequence) converges term by term in to the following inequality and hence the following will be proved:
    \begin{equation*} \begin{array}{l l}
        \underbrace{\|X(t)\|^p}_\mathbf{A} \le & \underbrace{\|X_0\|^p}_\mathbf{B} + p \underbrace{\int_0^t \|X(s^-)\|^{p-2} \langle X(s^-) , dV(s)\rangle}_\mathbf{C}\\
        & + p \underbrace{\int_0^t \|X(s^-)\|^{p-2} \langle X(s^-) , dM(s) \rangle}_\mathbf{D}\\
        & + \frac{p(p-1)}{2} \underbrace{\int_0^t \|X(s^-)\|^{p-2} d[M]^c(s)}_\mathbf{E}+F
    \end{array} \end{equation*}
    where
    \[ F = \sum_{0\le s\le t} \left( \|X(s)\|^p - X(s^-)^p - p \|X(s^-)\|^{p-2} \langle X(s^-) , \Delta Z(s) \rangle \right).\]
    We prove this claim in several steps.

    \begin{description}
      \item[(Step 1)] We claim that $\mathbb{E} |V^n-V|(t)^p \to 0$.
        Let $q(t)$ be the Radon-Nykodim derivative of $V(t)$ with respect to $|V|(t)$. We know that for every $t$, $\|q(t)\|\le 1$. We have
            \[ \mathbb{E} |V^n-V|(t)^p = \mathbb{E}\left(\int_0^t \|(R(n)-I) q(s)\| d|V|(s)\right)^p \]
        Note that for every $s$ and $\omega$, $\|(R(n)-I) q(s)\| \le 2 $ and tends to zero and since $|V|(t)<\infty,\quad a.s.$ by the Lebesgue's dominated convergence theorem, $\int_0^t \|(R(n)-I) q(s)\| d|V|(s)\to 0,\quad a.s.$ and is dominated by $2|V|(t)$. Now since $\mathbb{E} |V|(t)^p < \infty$ and using the Lebesgue's dominated convergence theorem we find that $\mathbb{E}\left(\int_0^t \|(R(n)-I) q(s)\| d|V|(s)\right)^p\to 0$ and the claim is proved.

      \item[(Step 2)] We claim that $\mathbb{E} [M^n-M](t)^\frac{p}{2} \to 0$.
        Note that $[M^n-M](t)\le 2[M^n](t)+2[M](t)\le 4[M](t)$ and hence $[M^n-M](t)^{\frac{p}{2}}$ is dominated by
        $4^{\frac{p}{2}}[M](t)^{\frac{p}{2}}$. On the other hand $\mathbb{E}[M^n-M](t) = \mathbb{E}\|M^n(t)-M(t)\|^2\to 0$. Hence $[M^n-M](t)$ and consequently $[M^n-M](t)^{\frac{p}{2}}$ tend to 0 in probability and therefore by Lebesgue's dominated convergence theorem it's expectation also tends to 0.
      \item[(Step 3)]
        We claim that
        \begin{equation}\label{equation: proof of Ito type inequality pth power 2}
            \mathbb{E}\sup_{0\le s \le t} \| X^n(s)-X(s)\|^p \to 0.
        \end{equation}
        We have
        \[ \begin{array} { l l}
            \| X^n(s)-X(s)\|^p & \le 3^p \underbrace{\|S(s) ( X^n_0-X_0)\|^p}_\mathbf{A_1}\\
            & + 3^p \underbrace{\|\int_0^s S(s-r) d(V^n(r)-V(r))\|^p}_\mathbf{A_2}\\
            & + 3^p \underbrace{\|\int_0^s S(s-r) d(M^n(r)-M(r))\|^p}_\mathbf{A_3}.
        \end{array}\]
        For $\mathbf{A_1}$ we have
            \[ \mathbb{E}\sup_{0\le s \le t} \mathbf{A_1} \le \mathbb{E}\|X_0^n-X_0\|^p \to 0.\]
        For $\mathbf{A_2}$ we have
            \[ \mathbb{E}\sup_{0\le s \le t} \mathbf{A_2} \le  \mathbb{E} |V^n-V|(t)^p \to 0,\]
        where we have used Step 1.
        For $\mathbf{A_3}$, we use Burkholder type inequality (Theorem~\ref{theorem:Burkholder type inequality}) for $\alpha=0$ and find
        \[ \mathbb{E}\sup_{0\le s \le t} \mathbf{A_3} \le K_p \mathbb{E}\left( [M^n-M](t)^{\frac{p}{2}}\right)\to 0,\]
        where we have used Step 2. Hence~\eqref{equation: proof of Ito type inequality pth power 2} is proved.

      \item[(Step 4)] We claim that 
              \begin{equation}\label{equation: proof of Ito type inequality *}
            \mathbb{E}\sup_{0\le s \le t} \| X^n(s)\|^p \to \mathbb{E}\sup_{0\le s \le t} \| X(s)\|^p
        \end{equation}
      By triangle inequality,
        \[ \begin{array}{l} \left|\left(\mathbb{E}\sup_{0\le s \le t} \| X^n(s)\|^p\right)^{\frac{1}{p}} - \left(\mathbb{E}\sup_{0\le s \le t} \| X(s)\|^p\right)^{\frac{1}{p}}\right| \le \\
            \left( \mathbb{E} \left| \sup_{0\le s \le t} \|X^n(s)\| - \sup_{0\le s \le t} \|X(s)\| \right|^p \right)^{\frac{1}{p}} \le \\
            \left( \mathbb{E} \sup_{0\le s \le t} |\|X^n(s)\|-\|X(s)\||^p \right)^{\frac{1}{p}} \le \\
            \left( \mathbb{E} \sup_{0\le s \le t} \|X^n(s)-X(s)\|^p \right)^{\frac{1}{p}} \to 0 \end{array}\]
	where in the last line we have used Step 3. Hence~\eqref{equation: proof of Ito type inequality *} is proved and in particular the sequence $\mathbb{E}\sup_{0\le s \le t} \| X^n(s)\|^p$ is bounded for each $t$.

      \item[(Step 5)] We claim that $\mathbb{E}|\mathbf{C^n}-\mathbf{C}| \to 0$. We have
        \[\begin{array} {l}
            \mathbb{E}|\mathbf{C^n}-\mathbf{C}| \le \underbrace{\mathbb{E}|\int_0^t (\|X^n(s^-)\|^{p-2}-\|X(s^-)\|^{p-2})\langle X^n(s^-) , dV^n(s)\rangle |}_\mathbf{C^n_1}\\
            + \underbrace{\mathbb{E}|\int_0^t \|X(s^-)\|^{p-2}\langle X^n(s^-)-X(s^-) , dV^n(s)\rangle |}_\mathbf{C^n_2} +\\
            \underbrace{\mathbb{E}|\int_0^t \|X(s^-)\|^{p-2}\langle X(s^-) , d(V^n(s)-V(s))\rangle|}_\mathbf{C^n_3}.
        \end{array} \]

        For the term $\mathbf{C^n_1}$ we have,
        \begin{equation*}
        		\begin{array} {l} \mathbf{C^n_1} \le \mathbb{E} \left( (\sup |\|X^n(s^-)\|^{p-2} - \|X(s^-)\|^{p-2}|)(\sup \|X^n(s^-)\|) |V^n|(t)\right) \end{array}
        \end{equation*}

        Now using the simple inequality $ |a-b|^r \le |a^r - b^r|$ for $r\ge 1$ and $a,b\in\mathbb{R}^+$ we have $|\|X^n(s^-)\|^{p-2} - \|X(s^-)\|^{p-2}| \le |\|X^n(s^-)\|^p-\|X(s^-)\|^p|^{\frac{p-2}{p}}$. Substituting and using the Holder inequality we find
        \[ \le \left( \mathbb{E} \sup |\|X^n(s^-)\|^p -\|X(s^-)\|^p|\right)^\frac{p-2}{p} \left( \mathbb{E} \sup \|X^n(s^-)\|^p\right)^\frac{1}{p} \left( \mathbb{E}|V^n|(t)^p\right)^\frac{1}{p} \]
        The second term above is bounded (according to step 4) and the third term is bounded by $\left(\mathbb{E}|V|(t)^p\right)^\frac{1}{p}$ since $|V^n|(t)\le|V|(t)$. We claim that the first term, after choosing a subsequence, converges to zero.
        We know from Step 3 that $\mathbb{E}\sup_{0\le s \le t} \| X^n(s)-X(s)\|^p \to 0$. Hence we can choose a subsequence $n_k$ for which $\sup_{0\le s \le t} \| X^{n_k}(s)-X(s)\|^p \to 0, a.s$. We have also $\sup_{0\le s \le t} \|X(s)\|<\infty, a.s$, hence
        \[ \sup |\|X^{n_k}(s^-)\|^p -\|X(s^-)\|^p|\to 0, \quad a.s \]
On the other hand
        \begin{multline*}
\sup_{0\le s\le t} |\|X^{n_k}(s^-)\|^p -\|X(s^-)\|^p| \le \\
2^p \sup_{0\le s\le t} \|X^{n_k}(s^-)-X(s^-)\|^p + (2^p+1) \sup_{0\le s\le t} \|X(s^-)\|^p.        
        \end{multline*}

        Hence by dominated convergence theorem we have
        \[ \mathbb{E} \sup_{0\le s\le t} |\|X^{n_k}(s^-)\|^p -\|X(s^-)\|^p| \to 0 \]
        and therefore for the same subsequence $\mathbf{C^n_1}\to 0$.

        For the term $\mathbf{C^n_2}$ we have,
        \[ \mathbf{C^n_2} \le \mathbb{E}\left( (\sup_{0\le s\le t}\|X(s^-)\|^{p-2})(\sup_{0\le s\le t}\|X^n(s^-)-X(s^-)\|)|V^n|(t)\right)\]
        By Holder inequality we have
        \[ \le \left( \mathbb{E} \sup_{0\le s\le t}\|X(s^-)\|^p\right)^\frac{p-2}{p} \left( \mathbb{E} \sup_{0\le s\le t}\|X^n(s^-)-X(s^-)\|^p\right)^\frac{1}{p}\left(\mathbb{E} |V^n|(t)^p\right)^\frac{1}{p}. \]
        The first and third terms are bounded and the second term tends to zero by Step 3. Hence $\mathbf{C^n_2}\to 0$.

        For the term $\mathbf{C^n_3}$ we have,
            \[ \mathbf{C^n_3} \le \mathbb{E}\left( (\sup_{0\le s\le t}\|X(s^-)\|^{p-1}) |V^n-V|(t)\right) \]
        By Holder inequality we have
            \[ \le \mathbb{E} \left( \sup_{0\le s\le t}\|X(s^-)\|^p \right)^{\frac{p-1}{p}} \left(\mathbb{E} (|V^n-V|(t)^p)\right)^{\frac{1}{p}} \]
        where tends to 0 by Step 1. Hence $\mathbf{C^n_3}\to 0$.

      \item[(Step 6)] We claim that $\mathbb{E}|\mathbf{D^n}-\mathbf{D}| \to 0$. We have
        \[\begin{array} {l}
            \mathbb{E}|\mathbf{D^n}-\mathbf{D}| \le \underbrace{\mathbb{E}|\int_0^t (\|X^n(s^-)\|^{p-2}-\|X(s^-)\|^{p-2})\langle X^n(s^-) , dM^n(s)\rangle |}_\mathbf{D^n_1}\\
            + \underbrace{\mathbb{E}|\int_0^t \|X(s^-)\|^{p-2}\langle X^n(s^-)-X(s^-) , dM^n(s)\rangle |}_\mathbf{D^n_2} +\\
            \underbrace{\mathbb{E}|\int_0^t \|X(s^-)\|^{p-2}\langle X(s^-) , d(M^n(s)-M(s))\rangle|}_\mathbf{D^n_3}.
        \end{array} \]

        For the term $\mathbf{D^n_1}$ we use Corollary~\ref{corollary: BDG} for $p=1$ and find
            \[ \mathbf{D^n_1} \le \mathcal{C}_p \mathbb{E} \left( (\sup |\|X^n(s^-)\|^{p-2} - \|X(s^-)\|^{p-2}|)(\sup \|X^n(s^-)\|) [M^n](t)^{\frac{1}{2}}\right) \]
        Now using the simple inequality $ |a-b|^r \le |a^r - b^r|$ for $r\ge 1$ and $a,b\in\mathbb{R}^+$ we have $|\|X^n(s^-)\|^{p-2} - \|X(s^-)\|^{p-2}| \le |\|X^n(s^-)\|^p-\|X(s^-)\|^p|^{\frac{p-2}{p}}$. Substituting and using the Holder inequality we find
            \[ \le \mathcal{C}_p \left( \mathbb{E} \sup |\|X^n(s^-)\|^p -\|X(s^-)\|^p|\right)^\frac{p-2}{p} \left( \mathbb{E} \sup \|X^n(s^-)\|^p\right)^\frac{1}{p} \left( \mathbb{E}[M^n](t)^\frac{p}{2}\right)^\frac{1}{p} \]
        The second term above is bounded (according to step 4) and the third term is bounded by $\left( \mathbb{E}[M](t)^\frac{p}{2}\right)^\frac{1}{p}$ since $[M^n](t)\le[M](t)$. The first term, by the same arguments as in Step 5, after choosing a subsequence, converges to zero.

        For the term $\mathbf{D^n_2}$ we use Corollary~\ref{corollary: BDG} for $p=1$ and find
        \[ \mathbf{D^n_2} \le \mathcal{C}_p \mathbb{E}\left( (\sup_{0\le s\le t}\|X(s^-)\|^{p-2})(\sup_{0\le s\le t}\|X^n(s^-)-X(s^-)\|)[M^n](t)^\frac{1}{2}\right)\]
        By Holder inequality we have
        \[ \le \mathcal{C}_p \left( \mathbb{E} \sup_{0\le s\le t}\|X(s^-)\|^p\right)^\frac{p-2}{p} \left( \mathbb{E} \sup_{0\le s\le t}\|X^n(s^-)-X(s^-)\|^p\right)^\frac{1}{p}\left(\mathbb{E} [M^n](t)^\frac{p}{2}\right)^\frac{1}{p}. \]
        The first and third terms are bounded and the second term tends to zero by Step 3. Hence $\mathbf{D^n_2}\to 0$.

        For the term $\mathbf{D^n_3}$ we use Corollary~\ref{corollary: BDG} for $p=1$ and find
            \[ \mathbf{D^n_3} \le \mathcal{C}_p \mathbb{E}\left( (\sup_{0\le s\le t}\|X(s^-)\|^{p-1}) [M^n-M](t)^\frac{1}{2}\right) \]
        By Holder inequality we have
            \[ \le \mathcal{C}_p \mathbb{E} \left( \sup_{0\le s\le t}\|X(s^-)\|^p \right)^{\frac{p-1}{p}} \left(\mathbb{E} ([M^n-M](t)^\frac{p}{2})\right)^{\frac{1}{p}} \]
        where tends to 0 by Step 2. Hence $\mathbf{C^n_3}\to 0$.

      \item[(Step 7)] We claim that $\mathbb{E}|\mathbf{E^n}-\mathbf{E}| \to 0$. We have
        \[\begin{array} {l}
            \mathbb{E}|\mathbf{E^n}-\mathbf{E}| \le \underbrace{\mathbb{E}|\int_0^t (\|X^n(s^-)\|^{p-2}-\|X(s^-)\|^{p-2}) d[M^n]^c(s)|}_\mathbf{E^n_1}\\
            + \underbrace{\mathbb{E}|\int_0^t \|X(s^-)\|^{p-2} d([M^n]^c(s)-[M]^c(s))|}_\mathbf{E^n_2}.
        \end{array} \]

        For the term $\mathbf{E^n_1}$ we have
            \[ \mathbf{E^n_1} \le \mathbb{E}\left( (\sup_{0\le s\le t}|\|X^n(s^-)\|^{p-2}-\|X(s^-)\|^{p-2}|) [M^n]^c(t)\right) \]
        Now using the simple inequality $ |a-b|^r \le |a^r - b^r|$ for $r\ge 1$ and $a,b\in\mathbb{R}^+$ we have
        \[ |\|X^n(s^-)\|^{p-2} - \|X(s^-)\|^{p-2}| \le |\|X^n(s^-)\|^p-\|X(s^-)\|^p|^{\frac{p-2}{p}}. \]
        Substituting and using the Holder inequality we find
            \[ \le \left( \mathbb{E} \sup |\|X^n(s^-)\|^p -\|X(s^-)\|^p|\right)^\frac{p-2}{p} \left( \mathbb{E}[M^n]^c(t)^\frac{p}{2}\right)^\frac{2}{p} \]
        The second term above is bounded by $\left( \mathbb{E}[M](t)^\frac{p}{2}\right)^\frac{2}{p}$ since $[M^n]^c(t)\le[M](t)$. The first term, by the same arguments as in Step 5, after choosing a subsequence, converges to zero.

        For the term $\mathbf{E^n_2}$ we have
        \[ \mathbf{E^n_2} \le \mathbb{E}\left( (\sup_{0\le s\le t}\|X(s^-)\|^{p-2})([M]^c(t)-[M^n]^c(t))\right)\]
        By Holder inequality we have
        \[ \le \left( \mathbb{E} \sup_{0\le s\le t}\|X(s^-)\|^p\right)^\frac{p-2}{p} \left( \mathbb{E}  ([M]^c(t)-[M^n]^c(t))^\frac{p}{2}\right)^\frac{2}{p}. \]
        The first term is a constant. for the second term we have $0\le [M]^c(t)-[M^n]^c(t)\le [M](t)-[M^n](t) \le [M](t)$ and hence $([M]^c(t)-[M^n]^c(t))^\frac{p}{2}$ is dominated by $[M](t)^\frac{p}{2}$. on the other hand $\mathbb{E} ([M]^c(t)-[M^n]^c(t)) \le \mathbb{E} (\|M\|(t)^2 - \|M^n\|(t)^2) \to 0$. Hence $[M]^c(t)-[M^n]^c(t)$ and consequently $([M]^c(t)-[M^n]^c(t))^\frac{p}{2}$ tends to 0 in probability and therefore by Lebesgue's dominated convergence theorem it's expectation also tends to 0. Hence $\mathbf{E^n_2}\to 0$.

      \item[(Step 8)] We claim that $\mathbf{F^n}\to \mathbf{F}\,a.s$.
        We use the following lemma that is proved later.
        \begin{lemma}\label{lemma: inequality in proof of Ito-type pth power}
            For $x,y\in H$ we have
                \[ \|x+y\|^p-\|x\|^p -p\|x\|^{p-2}\langle x , y \rangle \le \frac{1}{2}p(p-1) (\|x\|^{p-2} +\|x+y\|^{p-2})\|y\|^2 \]
        \end{lemma}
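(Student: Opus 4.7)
The plan is to reduce the inequality to a one-variable Taylor expansion along the segment joining $x$ and $x+y$. Set $g(t)=\|x+ty\|^p$ for $t\in[0,1]$; then $g(0)=\|x\|^p$, $g(1)=\|x+y\|^p$, and $g'(0)=p\|x\|^{p-2}\langle x,y\rangle$. The left-hand side of the lemma is therefore exactly $g(1)-g(0)-g'(0)$, which by Taylor's formula with integral remainder equals $\int_0^1(1-t)g''(t)\,dt$, provided $g\in C^2([0,1])$.

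A direct computation gives
\[ g''(t)=p(p-2)\|x+ty\|^{p-4}\langle x+ty,y\rangle^2+p\|x+ty\|^{p-2}\|y\|^2, \]
and Cauchy--Schwarz $\langle x+ty,y\rangle^2\le\|x+ty\|^2\|y\|^2$ combined with $p\ge 2$ yields the pointwise bound $g''(t)\le p(p-1)\|x+ty\|^{p-2}\|y\|^2$. The coefficient $p(p-2)$ also ensures that $g''$ remains continuous at any (at most one) value of $t$ where $x+ty=0$, since it vanishes for $p=2$ and the surviving term does so for $p>2$; thus $g\in C^2([0,1])$ and the Taylor identity is valid.

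It remains to replace $\|x+ty\|^{p-2}$ by a bound independent of $t$. The triangle inequality
\[ \|x+ty\|=\|(1-t)x+t(x+y)\|\le(1-t)\|x\|+t\|x+y\|\le\max(\|x\|,\|x+y\|), \]
combined with the elementary bound $\max(a,b)^q\le a^q+b^q$ valid for $a,b\ge 0$ and $q\ge 0$, gives $\|x+ty\|^{p-2}\le\|x\|^{p-2}+\|x+y\|^{p-2}$ uniformly in $t\in[0,1]$. Pulling this bound out of the integral and using $\int_0^1(1-t)\,dt=\tfrac12$ yields exactly the claimed estimate $\tfrac12 p(p-1)(\|x\|^{p-2}+\|x+y\|^{p-2})\|y\|^2$.

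The only subtlety worth flagging is the regularity of $g$ at points where the line segment meets the origin, but as noted the Cauchy--Schwarz step absorbs the apparent singularity, so no separate approximation argument is needed. Everything else is a routine scalar computation.
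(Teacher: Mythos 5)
Your proof is correct and follows essentially the same route as the paper: both define $g(t)=\|x+ty\|^p$, bound $g''(t)\le p(p-1)\|x+ty\|^{p-2}\|y\|^2$ via Cauchy--Schwarz, and conclude by Taylor's theorem together with $\|x+ty\|\le\max(\|x\|,\|x+y\|)$; the paper uses the Lagrange form of the remainder where you use the integral form, which is an immaterial difference. Your extra remark on the $C^2$ regularity of $g$ at a possible zero of $x+ty$ is a welcome point of care that the paper passes over silently.
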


        Note that the semimartingale $Z(s)$ is cadlag and hence is continuous except at a countable set of points $0\le s\le t$, and these are the only points in which the terms in the sums $\mathbf{F}$ and $\mathbf{F}^n$ are nonzero.

        By Lemma~\ref{lemma: inequality in proof of Ito-type pth power},
            \begin{equation}\label{equation: proof of Ito type pth power 4} \begin{array}{l} \left| \|X^n(s)\|^p - X^n(s^-)^p - p \|X^n(s^-)\|^{p-2} \langle X^n(s^-) , \Delta Z^n(s) \rangle \right|\\
            \le \frac{1}{2}p(p-1) (\|X^n(s^-)\|^{p-2} +\|X^n(s)\|^{p-2})\|\Delta Z^n(s)\|^2 \\
            \le p(p-1) (\sup_{0\le s\le t}\|X^n(s)\|^{p-2})\|\Delta Z^n(s)\|^2.
            \end{array} \end{equation}

        As in Step 5 we choose a subsequence $n_k$ for which there exists $\Omega_0\subset\Omega$ with $\mathbb{P}(\Omega_0)=1$ such that $\sup_{0\le s \le t} \| X^{n_k}(s)-X(s)\|^p \to 0, \textrm{ for }\omega\in\Omega_0$. Hence for $\omega\in\Omega_0$, $\|X^n(s)\| \to \|X(s)\|$ and in particular $\sup_n \sup_s \|X^n(s)\|^{p-2} <\infty$. Note also that $\|\Delta Z^n(s)\|^2\le \|\Delta Z(s)\|^2$ and that $\sum \|\Delta Z(s)\|^2 < \infty$. Hence by~\eqref{equation: proof of Ito type pth power 4}, for $\omega\in\Omega_0$, $\mathbf{F^n}$ is dominated by an absolutely convergent series. On the other hand since for $\omega\in\Omega_0$, $\|X^n(s)\| \to \|X(s)\|$ hence the terms of $\mathbf{F}^n$ converge to terms of $\mathbf{F}$. Hence by the dominated convergence theorem for series, we have $\mathbf{F^n}\to\mathbf{F}$ for $\omega\in\Omega_0$.
    \end{description}
\end{proof}

\begin{proof}[Proof of lemma~\ref{lemma: inequality in proof of Ito-type pth power}]
    Define $f(t)= \|x+ty\|^p$. Then
        \[ f'(t)= p\|x+ty\|^{p-2}\langle x+ty , y \rangle \]
    and
        \[ f''(t)= p\|x+ty\|^{p-2}\|y\|^2 +p(p-2)\|x+ty\|^{p-4} \langle x+ty , y \rangle ^2 \le p(p-1)\|x+ty\|^{p-2}\|y\|^2 \]
    By Taylor's remainder theorem we have for some $\tau\in [0,1]$,
        \[ f(1)-f(0)-f'(0)=\frac{1}{2}f''(\tau) \le \frac{1}{2} p(p-1)\|x+\tau y\|^{p-2}\|y\|^2 \]
    But $\|x+\tau y\|\le \max(\|x\|,\|x+y\|)$. Hence
        \[ f(1)-f(0)-f'(0) \le \frac{1}{2}p(p-1) (\|x\|^{p-2} +\|x+y\|^{p-2})\|y\|^2 \]
    which completes the proof.
\end{proof}

\section{Semilinear Stochastic Evolution Equations with L\'evy Noise and Monotone Nonlinear Drift}\label{section: Semilinear Stochastic Evolution Equations with Levy Noise and Monotone Nonlinearity}

In this section we will apply the theory developed in the last section to stochastic evolution equations. The noise term comes from a general L\'evy process and has Lipschitz coefficients, but the drift term is a non linear monotone operator.

The existence and uniqueness of the mild solutions of these equations in $L^2$ has been proved in~\cite{Salavati_Zangeneh:existence and uniqueness}. In this section we prove the existence and uniqueness of the solution in $L^p$ for $p\ge 2$ in Theorem~\ref{theorem: existence in L^p}. We also provide sufficient conditions under which the solutions are exponentially asymptotically stable.

Let $(\Omega,\mathcal{F},\mathcal{F}_t,\mathbb{P})$ be a filtered probability space. Let $(E,\mathcal{E})$ be a measurable space and $N(dt,d\xi)$ a Poisson random measure on $\mathbb{R}^+ \times E$ with intensity measure $dt \nu(d\xi)$. Our goal is to study the following equation in $H$,
\begin{equation}\label{main_equation}
    dX_t=AX_t dt+f(t,X_t) dt + \int_E k(t,\xi,X_{t-}) \tilde{N}(dt,d\xi),
\end{equation}
where $\tilde{N}(dt,d\xi)=N(dt,d\xi)-dt\nu(d\xi)$ is the compensated Poisson random measure corresponding to $N$.

We will use the notion of stochastic integration with respect to compensated Poisson random measure. For the definition and properties see~\cite{Peszat-Zabczyk} and~\cite{Albeverio-Mandrekar-Rudiger-2009}.

\begin{definition}
    $f:H\to H$ is called \emph{demicontinuous} if whenever $x_n \to x$, strongly in $H$ then $f(x_n)\rightharpoonup f(x)$ weakly in $H$.
\end{definition}

We assume the following,

\begin{hypothesis}\label{main_hypothesis}
    \begin{description}

        \item[(a)] $f(t,x,\omega):\mathbb{R}^+\times H\times \Omega \to H$ is measurable, $\mathcal{F}_t$-adapted, demicontinuous with respect to $x$ and there exists a constant $M$ such that
            \[ \langle f(t,x,\omega)-f(t,y,\omega),x-y \rangle \le M \|x-y\|^2,\]

        \item[(b)] $k(t,\xi,x,\omega):\mathbb{R}^+\times E\times H\times \Omega \to H$ is predictable and there exists a constant $C$ such that
            \[ \int_{E}\|k(t,\xi,x)-k(t,\xi,y)\|^2 \nu(d\xi) \le C \|x-y \|^2,\]

        \item[(c)] There exists a constant $D$ such that
            \[ \| f(t,x,\omega)\|^2 + \int_{E}\|k(t,\xi,x)\|^2 \nu(d\xi) \le D(1+\|x\|^2),\]
		
		\item[(d)] There exists a constant $F$ such that
            \[ \int_{E}\|k(t,\xi,x)-k(t,\xi,y)\|^p \nu(d\xi) \le F \|x-y \|^p,\]
            \[\int_{E}\|k(t,\xi,x)\|^p \nu(d\xi) \le F(1+\|x\|^p),\]
            
        \item[(e)] $X_0(\omega)$ is $\mathcal{F}_0$ measurable and $\mathbb{E}\|X_0\|^p < \infty$.
    \end{description}

\end{hypothesis}

\begin{definition}
    By a \emph{mild solution} of equation~\eqref{main_equation} with initial condition $X_0$ we mean an adapted c\`adl\`ag process $X_t$ that satisfies
    \begin{multline}\label{mild_solution}
        X_t=S_t X_0+\int_0^t S_{t-s}f(s,X_s) ds + \int_0^t{\int_E {S_{t-s}k(s,\xi,X_{s-})} \tilde{N}(ds,d\xi).}
    \end{multline}
\end{definition}

We will need an estimate for the $L^p$ norm of stochastic integrals with respect to compensated Poisson random measures. For this reason we state and prove the following theorem which is a Bichteler-Jacod inequality for Poisson integrals in infinite dimensions. This theorem is essentially the Lemma 4 of~\cite{Marinelli-Rockner-wellposedness} with an extension to $1\le p\le 2$. We provide a new proof for this theorem based on the Burkholder-Davies -Gundy inequality.

\begin{theorem}[An $L^p$ bound for Stochastic Integrals with Respect to Compensated Poisson Random Measures]\label{theorem: L^p_bound_stochastic_integral}
    Let $p\ge 1$. There exists a real constant denoted by $\mathfrak{C}_p$ such that if $k(t,\xi,\omega)$ is an $H$-valued predictable process for which the right hand side of \eqref{theorem: maximal inequality Poisson} is finite then
    \begin{equation} \label{theorem: maximal inequality Poisson}
    \begin{array}{l}
        \mathbb{E} \sup_{0\le t\le T} \left|\int_0^t\int_E k(s,\xi,\omega) \tilde{N}(ds,d\xi)\right|^p \le \\
        \mathfrak{C}_p \left( \mathbb{E} \left( \left(\int_0^T \int_E \left|k(s,\xi,\omega)\right| \nu(d\xi)ds\right)^{p} \right) + \mathbb{E} \int_0^T \int_E \left|k(s,\xi,\omega)\right|^p \nu(d\xi)ds\right)
    \end{array}
    \end{equation}
\end{theorem}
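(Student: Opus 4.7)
The plan is to apply the Burkholder--Davis--Gundy inequality to the $H$-valued compensated-Poisson martingale $M_t := \int_0^t\!\int_E k(s,\xi)\,\tilde N(ds,d\xi)$, and then dispatch the resulting moment of the quadratic variation by a case split on $p$, with a recursion that halves the exponent when $p$ is large. The quadratic variation of this pure-jump martingale is
\[ [M]_T \;=\; \sum_{s\le T}\|\Delta M_s\|^2 \;=\; \int_0^T\!\!\int_E \|k\|^2\,N(ds,d\xi), \]
and the Hilbert-valued BDG (upgraded from the scalar Theorem~\ref{theorem:BDG inequality} by projecting onto an orthonormal basis of $H$, or by applying Theorem~\ref{theorem: Ito type inequality for pth power} with $S_t=I$ together with Lemma~\ref{lemma: inequality in proof of Ito-type pth power} and taking expectations to kill the martingale term) gives $\mathbb{E}\sup_{0\le t\le T}\|M_t\|^p\le \mathcal{C}_p\,\mathbb{E}[M]_T^{p/2}$.

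For the base range $1\le p\le 2$, the subadditivity of $x\mapsto x^{p/2}$ on $\mathbb{R}_+$ yields $[M]_T^{p/2}\le \int_0^T\!\int_E\|k\|^p\,N(ds,d\xi)$; compensating then gives
\[ \mathbb{E}[M]_T^{p/2}\le \mathbb{E}\int_0^T\!\!\int_E\|k\|^p\,\nu(d\xi)ds, \]
which is already controlled by the second summand of~\eqref{theorem: maximal inequality Poisson} (the first summand is unnecessary in this range).

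For $p>2$ I would induct on the dyadic scale of the exponent. Splitting $[M]_T = \int\!\int\|k\|^2\tilde N + \int\!\int\|k\|^2\nu\,ds$ and using the $L^{p/2}$ triangle inequality,
\[ \mathbb{E}[M]_T^{p/2}\le 2^{p/2-1}\Bigl(\mathbb{E}\Bigl|\int_0^T\!\!\int_E\|k\|^2\tilde N\Bigr|^{p/2}+\mathbb{E}\Bigl(\int_0^T\!\!\int_E\|k\|^2\nu\,ds\Bigr)^{p/2}\Bigr). \]
The first summand on the right is the $L^{p/2}$-moment of a \emph{scalar} compensated Poisson integral with integrand $\|k\|^2$, to which the theorem applies recursively at the smaller exponent $p/2$; after finitely many halvings the exponent falls into $[1,2]$ and the base case closes the recursion. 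For the compensator summand I would use the elementary pointwise splits $\|k\|^2\le \|k\|+\|k\|^p$ and $x^{p/2}\le x+x^p$ for $x\ge 0$, $p\ge 2$ (each obtained by splitting at $1$) to fold $(\int\!\int\|k\|^2\nu\,ds)^{p/2}$ into the two nominal terms $\mathbb{E}(\int\!\int\|k\|\nu\,ds)^p$ and $\mathbb{E}\int\!\int\|k\|^p\nu\,ds$ of~\eqref{theorem: maximal inequality Poisson}, absorbing any residual lower-order averages by a Young-type step and the a priori finiteness of the right-hand side.

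The main obstacle is the bookkeeping of the $p>2$ recursion: one must check that each halving produces only terms that fit the two summands in~\eqref{theorem: maximal inequality Poisson} and that the constants compound to a finite $\mathfrak{C}_p$. The elementary algebraic splits $\|k\|^2\le\|k\|+\|k\|^p$ and $x^{p/2}\le x+x^p$ are the workhorses, and the integrability hypothesis on the right-hand side is precisely what is needed to absorb the linear-in-$\|k\|$ moments that crop up along the way into the $p$-th-power term.
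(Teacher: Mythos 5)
Your overall architecture --- BDG applied to the pure-jump martingale, the identity $[M]_T=\int_0^T\int_E\|k\|^2N(ds,d\xi)$, subadditivity of $x\mapsto x^{p/2}$ for the base range, and the dyadic recursion that splits $N=\tilde N+\nu\,ds$ and applies the theorem at exponent $p/2$ to the scalar compensated integral of $\|k\|^2$ --- coincides with the paper's proof. The gap is in the final step, where you must dominate the compensator moment $\mathbb{E}\bigl(\int_0^T\int_E\|k\|^2\nu(d\xi)ds\bigr)^{p/2}$ by the two admissible terms. Your proposed workhorses, the additive splits $\|k\|^2\le\|k\|+\|k\|^p$ and $x^{p/2}\le x+x^p$, do not accomplish this: they leave behind the first moment $\mathbb{E}\int_0^T\int_E\|k\|\,\nu(d\xi)ds$, and this quantity is \emph{not} bounded by any constant multiple of $\mathbb{E}\bigl(\int\int\|k\|\,\nu\,ds\bigr)^p+\mathbb{E}\int\int\|k\|^p\nu\,ds$: replace $k$ by $\epsilon k$ and let $\epsilon\to0$; the leftover term scales like $\epsilon$ while both admissible terms scale like $\epsilon^p$. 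The ``a priori finiteness of the right-hand side'' cannot rescue this, since finiteness of two quantities does not make one a constant multiple of the other, and the statement permits no additive constant.

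The fix (and the paper's actual argument) is multiplicative rather than additive: use the interpolation (Lyapunov) inequality with $\theta$ defined by $\theta+\frac{1-\theta}{p}=\frac12$,
\[ \Bigl(\int_0^T\!\!\int_E\|k\|^2\nu(d\xi)ds\Bigr)^{\frac{1}{2}}\le\Bigl(\int_0^T\!\!\int_E\|k\|\,\nu(d\xi)ds\Bigr)^{\theta}\Bigl(\int_0^T\!\!\int_E\|k\|^p\nu(d\xi)ds\Bigr)^{\frac{1-\theta}{p}}, \]
raise to the power $p$, and apply the weighted arithmetic--geometric mean inequality $u^{\theta}v^{1-\theta}\le\theta u+(1-\theta)v$ with $u=\bigl(\int\int\|k\|\,\nu\,ds\bigr)^p$ and $v=\int\int\|k\|^p\nu\,ds$. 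This lands exactly on the two summands of~\eqref{theorem: maximal inequality Poisson} with no residual lower-order moment, and the recursion then closes with $\mathfrak{C}_p$ assembled from $K_p$, $2^{p/2}$ and $\mathfrak{C}_{p/2}$. The rest of your outline (the base case via subadditivity and compensation, and the recursive treatment of $\mathbb{E}\bigl|\int\int\|k\|^2\tilde N\bigr|^{p/2}$) is sound and matches the paper.
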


\begin{proof}
    Assume that $2^n\le p < 2^{n+1}$. We prove by induction on $n$.

    \emph{Basis of induction:} $n=0$. In this case we have $1\le p<2$ and the statement follows from Theorem 8.23 of~\cite{Peszat-Zabczyk}. In fact, in this case we have 
    \[\mathbb{E} \left|\int_0^t\int_E k(s,\xi,\omega) \tilde{N}(ds,d\xi)\right|^p \le 
        \mathfrak{C}_p \mathbb{E} \int_0^t \int_E \left|k(s,\xi,\omega)\right|^p \nu(d\xi)ds\]
    \emph{Induction Step:} Now assume $n\ge 1$ and we have proved the statement for $n-1$. Hence $p\ge 2$. Applying Burkholder-Davies-Gundy inequality we find
    \[ \mathbb{E} \sup_{0\le t\le T} \left|\int_0^t\int_E k(s,\xi,\omega) \tilde{N}(ds,d\xi)\right|^p \le K_p \mathbb{E} \left( (\int_0^T \int_E \|k(s,\xi,\omega)\|^2 N(ds,d\xi))^\frac{p}{2}\right) \]
	Subtracting a compensator from the right hand side we get
    \[\begin{array}{l}
        \le K_p 2^{\frac{p}{2}} \left( \mathbb{E} \left( \left(\int_0^T \int_E \left|k(s,\xi,\omega)\right|^2 \tilde{N}(ds,d\xi) \right)^\frac{p}{2}\right) + \mathbb{E} \left( \left(\int_0^T \int_E \left|k(s,\xi,\omega)\right|^2 \nu(d\xi)ds \right)^\frac{p}{2}\right) \right)
    \end{array}\]

    Note that $2^{n-1}\le \frac{p}{2} <2^n$ hence we can apply the induction hypothesis to the first term on the right hand side and find
    \begin{equation}\label{equation: proof of L^p_bound_1}
        \begin{array} {l}
        \le K_p 2^{\frac{p}{2}} \mathfrak{C}_{\frac{p}{2}}\left( \mathbb{E} \left( \left( \int_0^T \int_E \left|k(s,\xi,\omega)\right|^2 \nu(d\xi) ds \right)^{\frac{p}{2}}\right) + \mathbb{E} \left( \int_0^T \int_E \left|k(s,\xi,\omega)\right|^p \nu(d\xi) ds \right)\right) \\
        + K_p 2^{\frac{p}{2}} \mathbb{E} \left( \left(\int_0^T \int_E \left|k(s,\xi,\omega)\right|^2 \nu(d\xi)ds \right)^\frac{p}{2}\right)
        \end{array}
    \end{equation}
    By the interpolation inequality for a suitbale $\theta$ such that $\theta+\frac{1-\theta}{p}=\frac{1}{2}$ we have
    \[ \left( \int_0^T \int_E \left|k(s,\xi,\omega)\right|^2 \nu(d\xi) ds \right)^{\frac{1}{2}} \le \left( \int_0^T \int_E \left|k(s,\xi,\omega)\right| \nu(d\xi) ds \right)^{\theta} \left( \int_0^T \int_E \left|k(s,\xi,\omega)\right|^p \nu(d\xi) ds \right)^{\frac{1-\theta}{p}} \]
    raising to power $p$ we have
    \[ \left( \int_0^T \int_E \left|k(s,\xi,\omega)\right|^2 \nu(d\xi) ds \right)^{\frac{p}{2}} \le \left( \int_0^T \int_E \left|k(s,\xi,\omega)\right| \nu(d\xi) ds \right)^{\theta p} \left( \int_0^T \int_E \left|k(s,\xi,\omega)\right|^p \nu(d\xi) ds \right)^{1-\theta} \]
    By the arithmetic-geometric mean inequality
    \[ \le \theta \left( \int_0^T \int_E \left|k(s,\xi,\omega)\right| \nu(d\xi) ds \right)^{{p}} + (1-\theta)\left( \int_0^T \int_E \left|k(s,\xi,\omega)\right|^p \nu(d\xi) ds \right) \]
    taking expectations and substituting in~\eqref{equation: proof of L^p_bound_1} the statement is proved.
\end{proof}

Let $(\Omega,\mathcal{F},\mathcal{F}_t,\mathbb{P})$ be a filtered probability space and assume $f$ satisfies Hypothesis~\ref{main_hypothesis}-(a) and there exists a constant $D$ such that $\|f(t,x,\omega)\|^2 \le D (1+\|x\|^2)$ and assume $V(t,\omega)$ is an adapted process with c\`adl\`ag trajectories and $X_0(\omega)$ is $\mathcal{F}_0$ measurable.

We will need the following theorem,

\begin{theorem}[Zangeneh,~\cite{Zangeneh-measurability} and~\cite{Zangeneh-Thesis}]\label{theorem:measurability}
    With assumptions made above, the equation
        \[ X_t=S_t X_0 + \int_0^t S_{t-s}f(s,X_s,\omega) ds + V(t,\omega)\]
    has a unique measurable adapted c\`adl\`ag solution $X_t(\omega)$. Furtheremore
    \[	\|X(t)\|\le \|X_0\|+\|V(t)\|+\int_0^t e^{(\alpha+M)(t-s)} \|f(s,S_s X_0+V(s))\| ds, \]
\end{theorem}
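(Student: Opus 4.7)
The plan is to work pathwise for each fixed $\omega$, with measurability recovered from the approximating scheme, and to split into uniqueness, existence, and the a~priori bound. For \textbf{uniqueness}, let $X^1,X^2$ be two mild solutions with the same data; their difference $D(t)=X^1(t)-X^2(t)$ is itself the mild solution of a convolution equation driven by the absolutely continuous process $Z_s=\int_0^s(f(r,X^1_r)-f(r,X^2_r))\,dr$, which has zero quadratic variation and no jumps. Theorem~\ref{theorem:ito type inequality} therefore yields
\[ \|D(t)\|^2 \le 2\int_0^t e^{2\alpha(t-s)}\langle D(s),f(s,X^1_s)-f(s,X^2_s)\rangle\,ds \le 2M\int_0^t e^{2\alpha(t-s)}\|D(s)\|^2\,ds \]
by Hypothesis~\ref{main_hypothesis}(a), and Gronwall's lemma forces $D\equiv 0$.

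For \textbf{existence}, since $f$ is only demicontinuous and one-sided Lipschitz I would construct the solution by approximating $f$ with Lipschitz maps $f^n$ that retain the monotonicity constant $M$ — for instance, by setting $g(t,x):=f(t,x)-Mx$ (which is dissipative: $\langle g(t,x)-g(t,y),x-y\rangle\le 0$), forming its Yosida approximation $g^n(t,\cdot)=g(t,\cdot)\circ J_n$ with $J_n=(I-n^{-1}g(t,\cdot))^{-1}$, and putting $f^n(t,x):=g^n(t,x)+Mx$. Each $f^n$ is Lipschitz, so Banach fixed point in $C([0,T];H)$ produces a unique adapted c\`adl\`ag $X^n$. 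Applying Theorem~\ref{theorem:ito type inequality} to $X^n-X^m$ and exploiting the uniform monotone bound shared by all $f^n$, together with the Yosida facts $J_n y\to y$ and $g^n(t,y)\to g(t,y)$ weakly, produces a Cauchy sequence in sup-norm. Demicontinuity (with a Minty-style passage to the limit) identifies $\int_0^t S_{t-s}f^n(s,X^n_s)\,ds$ with $\int_0^t S_{t-s}f(s,X_s)\,ds$, and adaptedness of $X$ is inherited from the $X^n$.

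For the \textbf{a~priori bound}, set $W(t):=X(t)-S_tX_0-V(t)=\int_0^t S_{t-s}f(s,X_s)\,ds$; this is the mild solution of the equation driven by the absolutely continuous $Z_s=\int_0^s f(r,X_r)\,dr$, so Theorem~\ref{theorem:ito type inequality} yields $\|W(t)\|^2\le 2\int_0^t e^{2\alpha(t-s)}\langle W(s),f(s,X_s)\rangle\,ds$. Writing $X_s=(S_sX_0+V(s))+W(s)$ and splitting the inner product, Hypothesis~\ref{main_hypothesis}(a) controls the difference term by $M\|W(s)\|^2$ while Cauchy-Schwarz bounds the other piece by $\|W(s)\|\cdot\|f(s,S_sX_0+V(s))\|$. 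Substituting $h(t):=e^{-\alpha t}\|W(t)\|$ converts this into $h(t)^2\le 2M\int_0^t h(s)^2\,ds+2\int_0^t h(s)a(s)\,ds$ with $a(s)=e^{-\alpha s}\|f(s,S_sX_0+V(s))\|$; the elementary ``square-root Gronwall'' lemma (if $h^2\le 2M\int h^2+2\int ha$ then $h(t)\le\int_0^t e^{M(t-s)}a(s)\,ds$) then gives $\|W(t)\|\le\int_0^t e^{(\alpha+M)(t-s)}\|f(s,S_sX_0+V(s))\|\,ds$, which combines with $\|S_tX_0\|+\|V(t)\|$ to produce the stated estimate.

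The \textbf{main obstacle} is the existence step: because $f$ is only demicontinuous (not strongly continuous and not Lipschitz), the limit $f^n(s,X^n_s)\to f(s,X_s)$ inside the convolution cannot be obtained by naive continuity. Monotonicity rescues the argument — it is what converts the It\^o-type inequality applied to $X^n-X^m$ into a genuine Cauchy estimate, and a Minty-style argument is required to ensure that the weak limit of the nonlinear term is actually $f(s,X_s)$ rather than some other element of the graph of $f$.
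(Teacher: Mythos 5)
The paper itself does not prove this theorem --- it is imported verbatim from Zangeneh's thesis and measurability paper --- so there is no in-paper argument to measure your proposal against; I assess it on its merits. Your uniqueness argument and your derivation of the a priori bound are correct and are exactly the expected ones: apply Theorem~\ref{theorem:ito type inequality} to a convolution driven by an absolutely continuous (hence jump-free, zero-quadratic-variation) integrator, use the semimonotonicity of Hypothesis~\ref{main_hypothesis}(a), and close with the square-root Gronwall lemma. One small discrepancy: your route yields $e^{\alpha t}\|X_0\|$ rather than $\|X_0\|$ in the final bound, so it reproduces the stated inequality only for $\alpha\le 0$; since the paper always reduces to $\alpha=0$ before invoking this estimate, this is immaterial (and is arguably a defect of the statement rather than of your argument).

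The existence step is where the substance lies, and there your sketch has two genuine gaps. First, the resolvent $J_n=(I-n^{-1}g(t,\cdot))^{-1}$ is only well defined (single-valued, defined on all of $H$, nonexpansive) if $-g(t,\cdot)$ is \emph{maximal} monotone; you must invoke the Minty--Browder theorem (an everywhere-defined, monotone, demicontinuous operator on a Hilbert space is maximal monotone) before $f^n$ even exists, and you must also check that $(t,\omega,x)\mapsto J_n x$ is jointly measurable and adapted so that the Picard iterates defining $X^n$ remain adapted. Second, and more seriously, ``the uniform monotone bound shared by all $f^n$'' cannot by itself produce a Cauchy estimate: inserted into the It\^o-type inequality for $X^n-X^m$ it gives only $\|X^n(t)-X^m(t)\|^2\le 2M\int_0^t\|X^n(s)-X^m(s)\|^2\,ds$ plus uncontrolled cross terms, and Gronwall applied to that alone would (absurdly) force $X^n\equiv X^m$. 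The Cauchy property comes entirely from the cross terms: using $\|x-J_nx\|=n^{-1}\|g^n(t,x)\|$ and the monotonicity of $-g$ evaluated at $J_nx$, $J_my$, one gets $\langle g^n(t,x)-g^m(t,y),\,x-y\rangle\le (n^{-1}+m^{-1})\bigl(\|g^n(t,x)\|+\|g^m(t,y)\|\bigr)^2$, so you need a bound on $\sup_n\sup_{t\le T}\|g^n(t,X^n_t)\|$ that is uniform in $n$. Such a bound is available --- $\|g^n(t,x)\|\le\|g(t,x)\|$, the linear growth of $f$, and the a priori estimate of your third part applied to each $X^n$ with constants independent of $n$ --- but that chain of uniform estimates must be set up before the Cauchy argument, and your proposal neither states it nor uses it. With it in place, the passage to the limit is actually easier than you suggest: $X^n\to X$ and $J_nX^n_s\to X_s$ strongly, so demicontinuity alone identifies the weak limit of $f^n(s,X^n_s)$ as $f(s,X_s)$; no Minty-type argument is needed.
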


The main theorem of this section,

\begin{theorem}[Existence of the Solution in $L^p$]\label{theorem: existence in L^p}
Let $p\ge 2$. Then under assumptions of Hypothesis~\ref{main_hypothesis}, equation~\eqref{main_equation} has a unique square integrable c\`adl\`ag mild solution $X(t)$ such that $\mathbb{E} \sup_{0\le s\le t} \|X(s)\|^p <\infty$.
\end{theorem}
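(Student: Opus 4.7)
The plan is to accept the $L^2$ existence and uniqueness of the mild solution $X_t$ from~\cite{Salavati_Zangeneh:existence and uniqueness}, since that result needs only hypotheses (a), (b), (c), and to exploit the extra assumption (d) to upgrade the moment bound to $L^p$. Uniqueness in $L^p$ is automatic, as every $L^p$-mild solution is \emph{a fortiori} an $L^2$-mild solution; the task therefore reduces to showing that the already-constructed $L^2$-solution satisfies $\mathbb{E}\sup_{0\le s\le t}\|X(s)\|^p<\infty$.

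To obtain the bound, I would localize by $\tau_N:=\inf\{t\ge 0:\|X_t\|\ge N\}$, which secures the integrability hypothesis of Theorem~\ref{theorem: Ito type inequality for pth power}, and apply that theorem to $X_{t\wedge\tau_N}$ with finite-variation part $V(t)=\int_0^{t\wedge\tau_N} f(s,X_s)\,ds$ and square integrable pure jump martingale part $M(t)=\int_0^{t\wedge\tau_N}\!\int_E k(s,\xi,X_{s-})\,\tilde{N}(ds,d\xi)$. Since $M$ arises from a compensated Poisson measure, $[M]^c\equiv 0$; the jump sum in the Itô-type inequality equals $\int_0^t\int_E e^{p\alpha(t-s)}G(s,\xi)\,N(ds,d\xi)$ with $G=\|X(s^-)+k\|^p-\|X(s^-)\|^p-p\|X(s^-)\|^{p-2}\langle X(s^-),k\rangle$, which decomposes into a $\tilde{N}$-martingale plus a compensator against $\nu(d\xi)\,ds$.

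After taking $\sup_{u\le t}$ and expectation, each contribution is treated as follows. The drift integral is handled via monotonicity (a) and growth (c): writing $\|X\|^{p-2}\langle X,f(s,X)\rangle\le M\|X\|^p+\|X\|^{p-1}\|f(s,0)\|$ and applying Young, it produces a term of the form $C\int_0^t\mathbb{E}\sup_{u\le s}\|X(u)\|^p\,ds$ plus a constant. The compensator piece of the jump sum is bounded, through Lemma~\ref{lemma: inequality in proof of Ito-type pth power} together with hypotheses (c) and (d), since $\int_E(\|X\|^{p-2}+\|X+k\|^{p-2})\|k\|^2\,\nu(d\xi)$ is controlled by a constant times $1+\|X\|^p$. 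Finally the two $\tilde{N}$-type martingale contributions, namely $p\int_0^{\cdot}\|X(s^-)\|^{p-2}\langle X(s^-),dM(s)\rangle$ and the $\tilde{N}$-part of the jump sum, are estimated by Corollary~\ref{corollary: BDG} and by Theorem~\ref{theorem: L^p_bound_stochastic_integral} respectively, in each case combined with Young's inequality with a small parameter $\varepsilon$, so that the resulting $\varepsilon\,\mathbb{E}\sup\|X\|^p$ can be absorbed into the left-hand side.

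The combined estimate has the form $\mathbb{E}\sup_{0\le s\le t\wedge\tau_N}\|X(s)\|^p\le C_1+C_2\int_0^t\mathbb{E}\sup_{0\le u\le s\wedge\tau_N}\|X(u)\|^p\,ds$ with constants independent of $N$; Gronwall gives a uniform-in-$N$ bound and Fatou's lemma concludes. The main obstacle is the jump-sum term: it must be rewritten as a Poisson integral, split carefully into its $\tilde{N}$-martingale and compensator parts, and bounded through Lemma~\ref{lemma: inequality in proof of Ito-type pth power} in such a way that the estimate closes using only (c), (d) and the monotonicity, without any Lipschitz assumption on $f$.
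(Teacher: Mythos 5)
Your proposal is correct in outline but takes a genuinely different route from the paper. The paper never derives an a priori $L^p$ estimate on the solution itself: it reuses the Picard iterates $X^n$ from the $L^2$ construction, shows by induction (Lemma~\ref{lemma: finite_pth_moment_iteration}, via the pathwise bound of Theorem~\ref{theorem:measurability}, the Burkholder type inequality and Theorem~\ref{theorem: L^p_bound_stochastic_integral}) that each $X^n$ has finite $p$th moment, and then applies Theorem~\ref{theorem: Ito type inequality for pth power} to the \emph{differences} $X^{n+1}-X^n$ to obtain $\mathbb{E}\|X^{n+1}_t-X^n_t\|^p\le C_0C_1^n t^n/n!$ (Lemma~\ref{lemma: convergence_pth_moment}), so that the iterates are Cauchy in $L^p(\Omega;L^\infty([0,T];H))$ and their limit coincides with the known $L^2$ solution. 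You instead apply the It\^o type inequality directly to the solution after localization and close the estimate by absorption, Gronwall and Fatou; this is more economical (no re-run of the iteration) but carries two technical burdens that the paper's route avoids and that your sketch glosses over: (i) $X_{t\wedge\tau_N}$ is \emph{not} the stochastic convolution of the stopped semimartingale, so Theorem~\ref{theorem: Ito type inequality for pth power} must be applied to $Y^N_t=S_tX_0+\int_0^tS_{t-s}\,dZ^{\tau_N}(s)$, which agrees with $X$ only on $[0,\tau_N]$ --- harmless here because for a contraction semigroup $\sup_{s\le t}\|Y^N_s\|=\sup_{s\le t\wedge\tau_N}\|X_s\|$, but it has to be said; (ii) the martingale term must be handled through the first bound of Corollary~\ref{corollary: BDG} followed by H\"older with exponents $p/(p-1)$ and $p$ and then Young, because the second bound of that corollary produces $\mathbb{E}\sup\|X\|^{2(p-1)}$, a moment you do not yet control. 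With these two points made precise your argument goes through; what the paper's iterative scheme buys is that every absorption is performed against quantities already known to be finite, at the price of establishing the $L^p$ bound twice (once for the iterates, once for their differences).
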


\begin{lemma}\label{lemma: alpha=0}
        It suffices to prove theorem~\ref{theorem: existence in L^p} for the case that $\alpha=0$.
    \end{lemma}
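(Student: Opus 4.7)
The plan is to adapt the rescaling trick from Lemma~\ref{lemma: alpha=0_Ito type inequality}, but I must now also absorb the exponential factor into the nonlinear coefficients $f$ and $k$, since these depend on the unknown process.

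First I would set $\tilde S_t:=e^{-\alpha t}S_t$, which is a $C_0$ semigroup of contractions on $H$. For a candidate process $X_t$ I would introduce $\tilde X_t:=e^{-\alpha t}X_t$ together with the rescaled coefficients
\[ \tilde f(t,y,\omega):=e^{-\alpha t}f\bigl(t,e^{\alpha t}y,\omega\bigr),\qquad \tilde k(t,\xi,y,\omega):=e^{-\alpha t}k\bigl(t,\xi,e^{\alpha t}y,\omega\bigr). \]
Using the identity $e^{-\alpha t}S_{t-s}=\tilde S_{t-s}\,e^{-\alpha s}$ and substituting $X_s=e^{\alpha s}\tilde X_s$ in the mild-solution formula, a routine calculation should show that $X_t$ is a mild solution of~\eqref{main_equation} with data $(S_t,f,k,X_0)$ if and only if $\tilde X_t$ is a mild solution of the analogous equation with data $(\tilde S_t,\tilde f,\tilde k,X_0)$. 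Since $e^{-\alpha t}$ is bounded from above and below on any finite $[0,T]$, the finiteness of $\mathbb E\sup_{0\le s\le t}\|X(s)\|^p$ is equivalent to that of $\mathbb E\sup_{0\le s\le t}\|\tilde X(s)\|^p$.

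The remaining task is to verify that $(\tilde f,\tilde k)$ still satisfies Hypothesis~\ref{main_hypothesis}, possibly with modified constants. In the monotonicity bound (a) and the Lipschitz estimates (b) and (d), the two factors $e^{-\alpha t}$ and $e^{\alpha t}$ cancel exactly, so these hold with the original constants $M$, $C$ and $F$. In the growth bounds in (c) and (d), the outer factor becomes $e^{-2\alpha t}$ (respectively $e^{-p\alpha t}$) and multiplies both the constant term and the power of $\|x\|$, so on $[0,T]$ they hold with enlarged constants $\tilde D=D\max(1,e^{-2\alpha T})$ and $\tilde F=F\max(1,e^{-p\alpha T})$; demicontinuity of $\tilde f(t,\cdot)$, predictability of $\tilde k$ and $\mathcal F_0$-measurability of $X_0$ are obviously preserved. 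I do not expect a serious obstacle here; the only point requiring mild care is that for $\alpha<0$ the factor $e^{-\alpha t}$ is not bounded by $1$, so the growth constants must be enlarged on each finite interval. Once the equivalence is set up, the $\alpha=0$ case of Theorem~\ref{theorem: existence in L^p} applied to the rescaled problem immediately yields the full statement.
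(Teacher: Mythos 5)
Your proposal is correct and follows exactly the paper's own argument: the paper defines the same rescaled semigroup $\tilde S_t=e^{-\alpha t}S_t$ and the same transformed coefficients $\tilde f(t,x,\omega)=e^{-\alpha t}f(t,e^{\alpha t}x,\omega)$, $\tilde k(t,\xi,x,\omega)=e^{-\alpha t}k(t,\xi,e^{\alpha t}x,\omega)$, and notes the equivalence of mild solutions under $\tilde X_t=e^{-\alpha t}X_t$. Your additional verification that Hypothesis~\ref{main_hypothesis} is preserved (with the cancellation in (a), (b), (d) and enlarged growth constants on $[0,T]$) is a detail the paper leaves implicit, and it checks out.
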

    \begin{proof} Define
        \begin{gather*}
           \tilde{S}_t= e^{-\alpha t} S_t ,\qquad \tilde{f}(t,x,\omega)=e^{-\alpha t}f(t,e^{\alpha t}x,\omega) , \\
           \tilde{k}(t,\xi,x,\omega)=e^{-\alpha t}k(t,\xi,e^{\alpha t}x,\omega).
        \end{gather*}
        Note that $\tilde{S}_t$ is a contraction semigroup. It is easy to see that $X_t$ is a mild solution of equation~\eqref{main_equation} if and only if $\tilde{X}_t=e^{-\alpha t} X_t$ is a mild solution of equation with coefficients $\tilde{S},\tilde{f},\tilde{k}$.
    \end{proof}

\begin{proof}[Proof of Theorem~\ref{theorem: existence in L^p}.]
	Existence and uniqueness of the mild solution in $L^2$ has been proved in~\cite{Salavati_Zangeneh:existence and uniqueness}, Theorem 4. Uniqueness in $L^2$ implies the uniqueness in $L^p$ for $p\ge 2$. It remains to prove the existence in $L^p$.

    \emph{Existence.}
	It suffices to prove the existence of a solution on a finite interval $[0,T]$. Then one can show easily that these solutions are consistent and give a global solution. We define adapted c\`adl\`ag processes $X^{n}_t$ recursively as follows. Let $X^0_t=S_t X_0$. Assume $X^{n-1}_t$ is defined. Theorem~\ref{theorem:measurability} implies that there exists an adapted c\`adl\`ag solution $X^n_t$ of
    \begin{equation}\label{equation: proof of existence_iteration}
        X^n_t=S_t X_0 + \int_0^t S_{t-s}f(s,X^n_s) ds + V^n_t,
    \end{equation}
    where
        \[ V^{n}_t= \int_0^t{\int_E {S_{t-s}k(s,\xi,X^{n-1}_{s-})} \tilde{N}(ds,d\xi)}. \]
    
	It is proved in~\cite{Salavati_Zangeneh:existence and uniqueness} that $\{X^n\}$ converge to some adapted c\`adl\`ag process $X_t$ in the sense that 
	\[ \mathbb{E} \sup\limits_{0\le t\le T} \|X^n_t-X_t\|^2 \to 0, \]
	and that $X_t$ is the mild solution of equation~\eqref{main_equation}.
	
    We wish to show that $\{X^n\}$ converge to $X_t$ in $L^p$ with the supremum norm. This is done by the following two lemmas.
    
    \begin{lemma}\label{lemma: finite_pth_moment_iteration}
    	\[ \mathbb{E}\sup\limits_{0\le t\le T} \|X^n_t\|^p<\infty. \]
    \end{lemma}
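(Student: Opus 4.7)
The plan is to prove the bound by induction on $n$, using the a priori estimate from Theorem~\ref{theorem:measurability} to reduce control of $X^n$ to control of the stochastic convolution term $V^n$, and then applying the Burkholder type inequality together with the Bichteler--Jacod estimate of Theorem~\ref{theorem: L^p_bound_stochastic_integral} to control $V^n$ in terms of $X^{n-1}$. By Lemma~\ref{lemma: alpha=0} I assume $\alpha=0$ so that $S_t$ is a contraction semigroup; this is the only setting where Theorem~\ref{theorem:Burkholder type inequality} applies directly.

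\textbf{Base case.} For $n=0$, $X^0_t = S_t X_0$, so $\sup_{0\le t\le T}\|X^0_t\|^p \le \|X_0\|^p$, whose expectation is finite by Hypothesis~\ref{main_hypothesis}(e).

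\textbf{Inductive step.} Assume $\mathbb{E}\sup_{0\le t\le T}\|X^{n-1}_t\|^p < \infty$. Apply Theorem~\ref{theorem:measurability} to equation~\eqref{equation: proof of existence_iteration} to get the pathwise bound
\[ \|X^n_t\| \le \|X_0\| + \|V^n_t\| + \int_0^t e^{M(t-s)}\|f(s, S_s X_0 + V^n_s)\|\,ds. \]
Using Hypothesis~\ref{main_hypothesis}(c) in the form $\|f(s,x)\|\le \sqrt{D}(1+\|x\|)$ and raising to the $p$-th power with Jensen's inequality, it suffices to show that $\mathbb{E}\sup_{0\le t\le T}\|V^n_t\|^p < \infty$, since the remaining terms are either deterministic or controlled by $\mathbb{E}\|X_0\|^p$ and $\mathbb{E}\sup\|V^n\|^p$.

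\textbf{Controlling $V^n$.} Let $M^n_t := \int_0^t \int_E k(s,\xi,X^{n-1}_{s-}) \tilde N(ds,d\xi)$, which is an $H$-valued square-integrable martingale with quadratic variation $[M^n]_T = \int_0^T \int_E \|k(s,\xi,X^{n-1}_{s-})\|^2 N(ds,d\xi)$. Since $V^n_t = \int_0^t S_{t-s} dM^n_s$, Theorem~\ref{theorem:Burkholder type inequality} yields
\[ \mathbb{E}\sup_{0\le t\le T}\|V^n_t\|^p \le K_p\, \mathbb{E}[M^n]_T^{p/2}. \]
To estimate $\mathbb{E}[M^n]_T^{p/2}$ I split off the compensator: $[M^n]_T = \int_0^T\!\!\int_E \|k\|^2\,\tilde N(ds,d\xi) + \int_0^T\!\!\int_E \|k\|^2\,\nu(d\xi)ds$, and then apply Theorem~\ref{theorem: L^p_bound_stochastic_integral} with exponent $p/2$ to the compensated real-valued integral, giving
\[ \mathbb{E}[M^n]_T^{p/2} \le C_p \Bigl(\mathbb{E}\Bigl(\int_0^T\!\!\int_E \|k\|^2 \nu(d\xi)ds\Bigr)^{p/2} + \mathbb{E}\int_0^T\!\!\int_E \|k\|^p \nu(d\xi)ds\Bigr). \]
Hypothesis~\ref{main_hypothesis}(c) bounds the first summand by $C_T(1 + \mathbb{E}\sup\|X^{n-1}\|^p)$, and Hypothesis~\ref{main_hypothesis}(d) bounds the second by $FT(1+\mathbb{E}\sup\|X^{n-1}\|^p)$. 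By the induction hypothesis both are finite, so $\mathbb{E}\sup\|V^n\|^p<\infty$, and the bound on $X^n$ above closes the induction.

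\textbf{Main obstacle.} The delicate part is passing from the $H$-valued stochastic convolution $V^n$ to a scalar estimate controlled by $X^{n-1}$; this is where the combination of Theorem~\ref{theorem:Burkholder type inequality} (to strip the semigroup) and Theorem~\ref{theorem: L^p_bound_stochastic_integral} applied at exponent $p/2$ (to split the quadratic-variation integral into a $\nu$-integral and an $L^p$-integral of the kernel) is essential, since the naive estimate $[M^n]_T^{p/2} \le (\int\int \|k\|^2 \nu)^{p/2}$ would require Hypothesis (c) at power $p/2$ rather than (d) at power $p$, and would not give enough control for general $p\ge 2$.
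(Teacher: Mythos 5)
Your proof is correct and follows essentially the same route as the paper: induction on $n$, the pathwise bound from Theorem~\ref{theorem:measurability} to reduce to $V^n$, the Burkholder type inequality to pass to $\mathbb{E}[M^n]_T^{p/2}$, and the splitting of $[M^n]_T$ into compensated integral plus compensator with Theorem~\ref{theorem: L^p_bound_stochastic_integral} applied at exponent $p/2$, closed via Hypotheses (c) and (d). No substantive differences from the paper's argument.
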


	\begin{proof}
		We prove by induction on $n$. By Theorem~\ref{theorem:measurability} we have the following estimate,
	    \[	\|X^n_t\|\le \|X_0\|+\|V^n_t\|+\int_0^t e^{M(t-s)} \|f(s,S_s X_0+V^n_s)\| ds. \]
	  	Hence,
	   \[ \|X^n_t\|^p \le 3^p \|X_0\|^p + 3^p \|V^n_t\|^p + 3^p \left( \int_0^t e^{M(t-s)} \|f(s , S_s X_0+V^n_s)\| ds \right)^p \]
	  	Taking supremum and using Cauchy-Schwartz inequality we find	
	  \begin{multline*}
	  		\sup\limits_{0\le t\le T} \|X^n_t\|^p \le 3^p \|X_0\|^p + 3^p \sup\limits_{0\le t\le T} \|V^n_t\|^p\\
	  		+ 3^p e^{|M|T} T^{\frac{p}{2}} \left( \underbrace{\int_0^T \|f(s , S_s X_0+V^n_s)\|^2 ds}_G \right)^\frac{p}{2}
	  \end{multline*}
	  	Using Hypothesis~\ref{main_hypothesis}-(c) and Holder's inequality we find
	  	\begin{multline*}
	   		G \le D^\frac{p}{2} \left( \int_0^T (1+\|S_s X_0 +V^n_s\|^2) ds \right)^\frac{p}{2} \\
	   			\le D^\frac{p}{2} \left( T + 2T \|X_0\|^2 + 2 \int_0^T \|V^n_s\|^2 ds \right) ^\frac{p}{2} \\
	   			\le D^\frac{p}{2} \left( 3^\frac{p}{2} T^\frac{p}{2} + 2^\frac{p}{2}3^\frac{p}{2} T^\frac{p}{2} \|X_0\|^p + 2^\frac{p}{2}3^\frac{p}{2} T^\frac{p}{2} \sup\limits_{0\le s\le T} \|V^n_s\|^2 \right)
	   	\end{multline*}
	   	
	   	Hence, to prove the Lemma it suffices to prove that 
	    \[ \mathbb{E}\sup\limits_{0\le t\le T} \|V^n_t\|^p<\infty. \]

	    Applying Burkholder type inequality (Theorem~\ref{theorem:Burkholder type inequality}), we find
	    \[ \mathbb{E}\sup\limits_{0\le t\le T} \|V_t\|^p \le K_p \mathbb{E} ([\tilde{M}]_T^{\frac{p}{2}}), \]
	    where $\tilde{M}_t=\int_0^t \int_E k(s,\xi,X^{n-1}_{s-}) \tilde{N}(ds,du)$. Hence
	    \begin{multline*}
	    		\mathbb{E}\sup\limits_{0\le t\le T} \|V_t\|^p \le K_p \mathbb{E} \left( (\int_0^T \int_E \|k(s,\xi,X^{n-1}_{s})\|^2 N(ds,du))^\frac{p}{2}\right) \\
	    		\le 2^\frac{p}{2} K_p \Bigg( \mathbb{E} \left( (\int_0^T \int_E \|k(s,\xi,X^{n-1}_{s})\|^2 \nu(du) ds)^\frac{p}{2} \right) \\
	    	+ \mathbb{E} \left( (\int_0^T \int_E \|k(s,\xi,X^{n-1}_{s})\|^2 \tilde{N}(ds,du))^\frac{p}{2}\right) \Bigg)
	    \end{multline*}
	    By Hypothesis~\ref{main_hypothesis} (c) we have,
	    \begin{multline*}
	    		\le 2^\frac{p}{2} K_p D^\frac{p}{2} \left(\mathbb{E}( \int_0^T (1+\|X^{n-1}_s\|^2) ds)^\frac{p}{2})\right) \\
	    		+ 2^\frac{p}{2} K_p \mathbb{E} \left( (\int_0^T \int_E \|k(s,\xi,X^{n-1}_{s})\|^2 \tilde{N}(ds,du))^\frac{p}{2}\right)
	    \end{multline*}
	    Since $\frac{p}{2}\ge 1$, we can apply Theorem~\ref{theorem: L^p_bound_stochastic_integral} to second term and find
	    \begin{multline}\label{equation: proof of finite pth moment-1}
	    		\le 2^\frac{p}{2} K_p D^\frac{p}{2} \left(\mathbb{E}( \int_0^T (1+\|X^{n-1}_s\|^2) ds)^\frac{p}{2})\right) \\
	    		+ 2^\frac{p}{2} K_p \mathfrak{C}_p \left(\mathbb{E}( \int_0^T (1+\|X^{n-1}_s\|^2) ds)^\frac{p}{2})\right) \\
	    		+ 2^\frac{p}{2} K_p \mathfrak{C}_p \mathbb{E} \int_0^T (1+\|X^{n-1}_s\|^p) ds) \\
	    \end{multline}
	    Combining \eqref{equation:proof_of_finite_pth_moment-0} and \eqref{equation: proof of finite pth moment-1} we find by Hypothesis~\ref{main_hypothesis} (c) we have,
	    \begin{multline}\label{equation: proof of finite pth moment-4}
	    		\le 2^\frac{p}{2} \left((D\int_0^T \mathbb{E}\|X^{n-1}_s\|^2 ds)^\frac{p}{2}\right. \\
	    		\left. + \mathfrak{C}_\frac{p}{2} \left( \left( D\int_0^T \mathbb{E}\|X^{n-1}_s\|^2 ds \right)^\frac{p}{2} + D \left( \int_0^T \mathbb{E}\|X^{n-1}_s\|^p ds \right) \right) \right)\\
	    		  \le C_1 \left( (\int_0^T \mathbb{E}\|X^{n-1}_s\|^2 ds)^\frac{p}{2} \right) + C_2 \left( \int_0^T \mathbb{E}\|X^{n-1}_s\|^p ds \right) 
	    \end{multline}
	    where $C_1=2^\frac{p}{2} D (1+\mathfrak{C}_\frac{p}{2})$ and $C_2=2^\frac{p}{2} \mathfrak{C}_\frac{p}{2} D$, now by Holder inequality we find,
	    \begin{multline}\label{equation: proof of finite pth moment-5}
	    		\le C_3 \left( \int_0^T \mathbb{E}\|X^{n-1}_s\|^p ds \right) 
	    \end{multline}
	    which is finite by induction. The basis of induction follows directly from Hypothesis~\ref{main_hypothesis}-(e).
	\end{proof}

    \begin{lemma}\label{lemma: convergence_pth_moment}
        For $0\le t\le T$,
        \begin{equation}\label{equation: proof of existence_supremum convergence}
            \mathbb{E} \|X^{n+1}_t-X^n_t\|^p \le C_0 C_1^n \frac{t^n}{n!}
        \end{equation}
        where $C_0$ and $C_1$ are constants that are introduced below.
    \end{lemma}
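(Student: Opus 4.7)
The plan is to apply the $p$th power It\^o type inequality (Theorem~\ref{theorem: Ito type inequality for pth power}) to the difference $Y^n_t := X^{n+1}_t - X^n_t$, exploit monotonicity of $f$ and the Poisson compensator on the resulting jump sum to arrive at a Gronwall-compatible recursion, and iterate that recursion to extract the factorial decay. Subtracting~\eqref{equation: proof of existence_iteration} at levels $n+1$ and $n$ expresses $Y^n_t = \int_0^t S_{t-s}\,dZ^n_s$ with finite-variation part $V^n_t = \int_0^t\bigl(f(s,X^{n+1}_s)-f(s,X^n_s)\bigr)\,ds$ and pure-jump martingale $M^n_t = \int_0^t\int_E\bigl(k(s,\xi,X^n_{s-})-k(s,\xi,X^{n-1}_{s-})\bigr)\tilde N(ds,d\xi)$; the integrability conditions of Theorem~\ref{theorem: Ito type inequality for pth power} hold thanks to Lemma~\ref{lemma: finite_pth_moment_iteration} and Hypotheses~\ref{main_hypothesis}(c),(d). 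After the reduction $\alpha=0$ (cf.\ Lemma~\ref{lemma: alpha=0_Ito type inequality}) and noting $Y^n_0=0$, $[M^n]^c\equiv0$, the theorem yields
\[
\|Y^n_t\|^p \le p\!\int_0^t\!\|Y^n_{s^-}\|^{p-2}\langle Y^n_{s^-},\, f(s,X^{n+1}_s)-f(s,X^n_s)\rangle\,ds + \mathcal N^n_t + F^n,
\]
where $\mathcal N^n$ is the stochastic integral against $dM^n$ and $F^n$ is the jump sum.

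By Hypothesis~\ref{main_hypothesis}(a) the first integrand is $\le M\|Y^n_s\|^p$; localizing at $\tau_N = \inf\{t:\|Y^n_t\|\ge N\}$ and then sending $N\to\infty$ turns $\mathcal N^n$ into a genuine zero-mean martingale. For $F^n$ I would invoke Lemma~\ref{lemma: inequality in proof of Ito-type pth power} to obtain
\[
F^n \le \tfrac{p(p-1)}{2}\sum_{s\le t}\bigl(\|Y^n_{s^-}\|^{p-2}+\|Y^n_s\|^{p-2}\bigr)\|\Delta Y^n_s\|^2.
\]
The predictable piece with $\|Y^n_{s^-}\|^{p-2}$ compensates directly: Hypothesis~\ref{main_hypothesis}(b) bounds $\int_E\|k-k'\|^2\nu(d\xi) \le C\|Y^{n-1}_{s^-}\|^2$, and Young's inequality $a^{p-2}b^2\le\tfrac{p-2}{p}a^p+\tfrac{2}{p}b^p$ splits the result into $\|Y^n\|^p$ and $\|Y^{n-1}\|^p$ contributions. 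For the non-predictable piece I would use $\|Y^n_s\|^{p-2}\le c_p(\|Y^n_{s^-}\|^{p-2}+\|\Delta Y^n_s\|^{p-2})$: the first summand reproduces the previous case, and the second produces $\|\Delta Y^n_s\|^p$, whose compensator is controlled by Hypothesis~\ref{main_hypothesis}(d) via $\int_E\|k-k'\|^p\nu(d\xi) \le F\|Y^{n-1}_{s^-}\|^p$.

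Taking expectations and collecting terms gives the recursion
\[
\mathbb E\|Y^n_t\|^p \le A\!\int_0^t\mathbb E\|Y^n_s\|^p\,ds + B\!\int_0^t\mathbb E\|Y^{n-1}_s\|^p\,ds
\]
with $A,B$ independent of $n$. Gronwall in $t$ absorbs the self-integral and leaves $\mathbb E\|Y^n_t\|^p \le C_1\int_0^t\mathbb E\|Y^{n-1}_s\|^p\,ds$ with $C_1:=Be^{AT}$; induction on $n$ with base case $\mathbb E\|Y^0_t\|^p \le C_0 < \infty$ (from Lemma~\ref{lemma: finite_pth_moment_iteration}) delivers the claimed bound $C_0 C_1^n t^n/n!$. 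The main technical obstacle lies in the jump bookkeeping: one must split $\|Y^n_s\|^{p-2}$ into predictable-plus-jump components so that everything compensates cleanly, and it is precisely at the $\|\Delta Y^n_s\|^p$ piece that the stronger Hypothesis~\ref{main_hypothesis}(d) (rather than only the $L^2$ bound (b)) becomes indispensable, in contrast with the $L^2$-framework of~\cite{Salavati_Zangeneh:existence and uniqueness}.
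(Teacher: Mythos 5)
Your proposal is correct and follows essentially the same route as the paper: apply Theorem~\ref{theorem: Ito type inequality for pth power} to $X^{n+1}-X^n$ with $\alpha=0$, absorb the drift term by semimonotonicity, note $[M]^c=0$, compensate the jump sum via Lemma~\ref{lemma: inequality in proof of Ito-type pth power} together with Hypothesis~\ref{main_hypothesis}(b),(d) and the Young-type splitting $a^{p-2}b^2\le\tfrac{p-2}{p}a^p+\tfrac{2}{p}b^p$, then Gronwall and iterate to get the factorial decay. If anything, your bookkeeping of the non-predictable factor $\|Y^n_s\|^{p-2}$ is slightly more careful than the paper's, which at this point silently replaces $\|x+y\|^{p-2}$ by $\|x\|^{p-2}+\|y\|^{p-2}$ without the constant $2^{p-2}$ that it does track in the later stability proof.
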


    \begin{proof}
        We prove by induction on $n$. Assume that the statement is proved for $n-1$. We have,
        \begin{equation}\label{equation: proof of existence_X^n+1-X^n}
            X^{n+1}_t-X^n_t= \int_0^t S_{t-s}(f(s,X^{n+1}_s)-f(s,X^n_s)) ds + \int_0^t S_{t-s} dM_s,
        \end{equation}
        where
        \begin{eqnarray*}
            M_t &=&\int_0^t \int_E {(k(s,\xi,X^{n}_{s-})-k(s,\xi,X^{n-1}_{s-}))\tilde{N}(ds,d\xi)}.
        \end{eqnarray*}
        Applying Theorem~\ref{theorem: Ito type inequality for pth power}, for $\alpha=0$, we have
        \begin{multline}\label{equation:proof of existence 1}
            \lVert X^{n+1}_t-X^n_t \rVert ^p \le \\
            p \underbrace{\int_0^t {\|X^{n+1}_s-X^n_s\|^{p-2} \langle X^{n+1}_{s}-X^n_{s},f(s,X^{n+1}_s)-f(s,X^n_s)\rangle ds}}_{A^n_t}\\
            + p \underbrace{\int_0^t \|X^{n+1}_{s-}-X^n_{s-}\|^{p-2} \langle X^{n+1}_{s-}-X^n_{s-},dM_s\rangle}_{B^n_t}\\
		  + \frac{1}{2}p(p-1)  \underbrace{ \int_0^t \|X^{n+1}_{s-}-X^n_{s-}\|^{p-2} d[M]^c_s}_{C^n_t} + \int_0^t \int_E D^n_s N(ds,d\xi)
        \end{multline}
where
	\begin{multline*}
		D^n_s = \|X^{n+1}_{s-}-X^n_{s-} + k(s,\xi,X^n_{s-})-k(s,\xi,X^{n-1}_{s-})\|^p - \|X^{n+1}_{s-}-X^n_{s-}\|^p \\
		- p \|X^{n+1}_{s-}-X^n_{s-}\|^{p-2} \langle X^{n+1}_{s-}-X^n_{s-},k(s,\xi,X^n_{s-})-k(s,\xi,X^{n-1}_{s-})\rangle
	\end{multline*}
        Note that for a c\`adl\`ag function the set of discontinuity points is countable, hence when integrating with respect to Lebesgue measure, they can be neglected. Therefore from now on, we neglect the left limits in integrals with respect to Lebesgue measure. So, for the term $A_t$, the semimonotonicity assumption on $f$ implies
        \begin{equation}\label{equation:proof of existence_A_t}
            A^n_t \le M \int_0^t \|X^{n+1}_s-X^n_s\|^p ds
        \end{equation}
        We also have
        \[ [M]^c_t =0 \]
        and hence
        \begin{equation*}
           C^n_t =0
        \end{equation*}
		For the term $D^n_s$ we have by Lemma~\ref{lemma: inequality in proof of Ito-type pth power},
		\begin{multline*}
			D^n_s \le \frac{1}{2} p (p-1) \Bigg(  \|X^{n+1}_{s-}-X^n_{s-}\|^{p-2} \\
			+ \|k(s,\xi,X^n_{s-})-k(s,\xi,X^{n-1}_{s-})\|^{p-2} \Bigg) \|k(s,\xi,X^n_{s-})-k(s,\xi,X^{n-1}_{s-})\|^2
		\end{multline*}
		Hence by Hypothesis~\ref{main_hypothesis}-(b) and (d),
		\begin{multline}\label{equation:proof of existence_E(int D_s)}
        	\mathbb{E} \int_E D^n_s \nu(d\xi) \le \frac{1}{2} p (p-1) \Bigg(  C \mathbb{E} \left( \|X^{n+1}_{s-}-X^n_{s-}\|^{p-2}  \|X^{n}_s-X^{n-1}_s\|^2 \right) \\
        	+ F  \mathbb{E} \left( \|X^{n}_{s-}-X^{n-1}_{s-}\|^{p}\right) \Bigg)
        \end{multline}

        Now, taking expectations on both sides of~\eqref{equation:proof of existence 1} and substituting~\eqref{equation:proof of existence_A_t} and~\eqref{equation:proof of existence_E(C_t)} and~\eqref{equation:proof of existence_E(int D_s)} and noting that $B_t$ is a martingale we find,
        \begin{multline*}
        	\mathbb{E} \|X^{n+1}_t-X^n_t\|^p \le p M \int_0^t \mathbb{E}\|X^{n+1}_s-X^n_s\|^p ds \\
        	+ \frac{1}{2} p(p-1)C \int_0^t \mathbb{E}\left(\|X^{n+1}_s-X^n_s\|^{p-2}\|X^n_s-X^{n-1}_s\|^2\right)ds\\
        	+ \frac{1}{2} p (p-1) F \mathbb{E} \|X^n_s-X^{n-1}_s\|^p ds.
        \end{multline*}
        Applying Holder's inequality to the second integral in the right hand side we find
        \begin{multline*}
        	\le p M \int_0^t \mathbb{E}\|X^{n+1}_s-X^n_s\|^p ds \\
        	+ \frac{1}{2} p(p-1)C \left( \frac{p-2}{p}\int_0^t \mathbb{E}\|X^{n+1}_s-X^n_s\|^ ds + \frac{2}{p}\int_0^t \mathbb{E}\|X^n_s-X^{n-1}_s\|^p ds \right)\\
        	+ \frac{1}{2} p (p-1) F \mathbb{E} \|X^n_s-X^{n-1}_s\|^p ds\\
        	\le \beta  \int_0^t \mathbb{E}\|X^{n+1}_s-X^n_s\|^p ds + \gamma \int_0^t \mathbb{E}\|X^n_s-X^{n-1}_s\|^p ds
        \end{multline*}
        where $\beta= pM+ \frac{1}{2} (p-1)(p-2) C$ and $\gamma=\frac{1}{2} (p-1) (2C + pF)$.

	Define $h^n(t)=\mathbb{E}\|X^{n+1}_t-X^n_t\|^p$. We have
			\[ h^n(t) \le \beta \int_0^t h^n(s) ds + \gamma \int_0^t h^{n-1}(s) ds \]
       By Lemma~\ref{lemma: finite_pth_moment_iteration} we know that $h^n(t)$ is uniformly bounded with respect to $t$, hence we can use Gronwall's inequality and find
       	\begin{equation*}
            h^n(t)\le \gamma e^{\beta t} \int_0^t h^{n-1}(s) ds
        \end{equation*}
        We have $h^0(t)\le C_0$ where $C_0=2^p \mathbb{E} \sup_{0\le t \le T} (\|X^1_t\|^p + \|X^0_t\|^p)<\infty$ and it follows inductively that,
        	\[ h^n(t) \le C_0 C_1^n \frac{t^n}{n!} \]
        where $C_1=\gamma e^{\beta T}$.
       \end{proof}

    Back to the proof of Theorem~\ref{theorem: existence in L^p}, since the right hand side of~\eqref{equation: proof of existence_supremum convergence} is a convergent series, $\{X^n\}$ is a cauchy sequence in $L^p(\Omega,\mathcal{F},\mathbb{P};L^\infty([0,T];H))$ and hence converges to a process $Y_t(\omega)$. But as is proved in \cite{Salavati_Zangeneh:existence and uniqueness}, $\{X^n\}$ converges to a process $X_t$ in $L^2(\Omega,\mathcal{F},\mathbb{P};L^\infty([0,T];H))$ which is a solution of equation~\eqref{main_equation}. Hence $Y_t=X_t$.
\end{proof}

\begin{theorem} [Exponential Stability in the $p$th Moment]\label{theorem: stability}
    Let $X_t$ and $Y_t$ be mild solutions of~\eqref{main_equation} with initial conditions $X_0$ and $Y_0$. Then
    \begin{eqnarray*}
        \mathbb{E} \| X_t-Y_t \| ^p &\le&  e^{\gamma t} \mathbb{E}\|X_0-Y_0\|^p
    \end{eqnarray*}
    for $\gamma= p \alpha + p M+\frac{1}{2}p(p-1) C+ \frac{1}{2} p (p-1) ((2^{p-2}+1) C + 2^{p-2} F)$. In particular, if $\gamma < 0$ then all mild solutions are exponentially stable in the $L^p$ norm.
\end{theorem}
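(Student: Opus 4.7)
The plan is to apply the pathwise $p$th-power It\^o type inequality (Theorem~\ref{theorem: Ito type inequality for pth power}) to the difference process $Z_t := X_t - Y_t$, take expectations, absorb the stochastic terms, and finish with Gronwall. By subtracting the mild equations for $X_t$ and $Y_t$, we see that $Z_t = S_t(X_0-Y_0) + \int_0^t S_{t-s}\,dZ(s)$ where $Z(s) = V(s) + M(s)$ with finite-variation part $V(s) = \int_0^s (f(r,X_r)-f(r,Y_r))\,dr$ and martingale part $M(s) = \int_0^s\int_E (k(r,\xi,X_{r-})-k(r,\xi,Y_{r-}))\,\tilde N(dr,d\xi)$. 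The hypotheses together with Theorem~\ref{theorem: existence in L^p} guarantee that the integrability assumptions of Theorem~\ref{theorem: Ito type inequality for pth power} are satisfied.

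Applying that theorem and taking expectations, the $\langle Z_{s-}, dM_s\rangle$ term vanishes (it is a true martingale by the $L^p$ bounds), and the $d[M]^c$ term vanishes because $M$ is a pure jump martingale driven by $\tilde N$. The drift term is controlled by Hypothesis~\ref{main_hypothesis}(a):
\begin{equation*}
\|Z_{s-}\|^{p-2}\langle Z_{s-}, f(s,X_s)-f(s,Y_s)\rangle \le M\,\|Z_s\|^p.
\end{equation*}
For the jump sum, I would rewrite it as a Poisson integral $\int_0^t\!\int_E e^{p\alpha(t-s)} \Psi(s,\xi)\, N(ds,d\xi)$ with integrand
\begin{equation*}
\Psi(s,\xi) = \|Z_{s-} + \Delta_\xi\|^p - \|Z_{s-}\|^p - p\,\|Z_{s-}\|^{p-2}\langle Z_{s-}, \Delta_\xi\rangle,
\end{equation*}
where $\Delta_\xi = k(s,\xi,X_{s-})-k(s,\xi,Y_{s-})$; after compensation the $\tilde N$-piece is a martingale (zero expectation) and only the compensator contributes.

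To estimate the compensator I would invoke Lemma~\ref{lemma: inequality in proof of Ito-type pth power} to bound $\Psi$ by $\tfrac{1}{2}p(p-1)(\|Z_{s-}\|^{p-2}+\|Z_{s-}+\Delta_\xi\|^{p-2})\|\Delta_\xi\|^2$, then use $\|Z_{s-}+\Delta_\xi\|^{p-2} \le 2^{p-2}(\|Z_{s-}\|^{p-2}+\|\Delta_\xi\|^{p-2})$, which is valid for all $p\ge 2$. Integrating against $\nu(d\xi)$ and using Hypothesis~\ref{main_hypothesis}(b) for the $\|Z_{s-}\|^{p-2}\|\Delta_\xi\|^2$ piece and Hypothesis~\ref{main_hypothesis}(d) for the $\|\Delta_\xi\|^p$ piece produces a bound of the form $\kappa_0\,\|Z_s\|^p$ (using $Z_{s-}=Z_s$ Lebesgue-a.e.) with $\kappa_0 = \tfrac{1}{2}p(p-1)((2^{p-2}+1)C+2^{p-2}F)$, plus the separate $\tfrac{1}{2}p(p-1)C$ coming from a symmetric treatment of the $\|Z_{s-}\|^{p-2}\int_E\|\Delta_\xi\|^2 \nu(d\xi)$ contribution. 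Collecting everything yields
\begin{equation*}
\mathbb{E}\|Z_t\|^p \le e^{p\alpha t}\mathbb{E}\|Z_0\|^p + (\gamma - p\alpha)\int_0^t e^{p\alpha(t-s)} \mathbb{E}\|Z_s\|^p\, ds.
\end{equation*}

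Multiplying through by $e^{-p\alpha t}$ and applying the classical (scalar) Gronwall inequality to $t\mapsto e^{-p\alpha t}\mathbb{E}\|Z_t\|^p$ then gives $\mathbb{E}\|Z_t\|^p\le e^{\gamma t}\mathbb{E}\|X_0-Y_0\|^p$, with exponential stability when $\gamma<0$. The main obstacle is the bookkeeping in step three: one must carefully split the jump sum into compensator plus martingale, verify the martingale piece has zero expectation (again using the $L^p$ bound from Theorem~\ref{theorem: existence in L^p}), and choose the decomposition of $\|Z_{s-}+\Delta_\xi\|^{p-2}$ so that the resulting constants match the stated $\gamma$; everything else is routine once this is done.
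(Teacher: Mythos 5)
Your proposal is correct and follows essentially the same route as the paper: apply Theorem~\ref{theorem: Ito type inequality for pth power} to $X_t-Y_t$, kill the martingale term in expectation, bound the drift via Hypothesis~\ref{main_hypothesis}(a), bound the jump compensator via Lemma~\ref{lemma: inequality in proof of Ito-type pth power} together with the splitting $\|x+y\|^{p-2}\le 2^{p-2}(\|x\|^{p-2}+\|y\|^{p-2})$ and Hypotheses (b) and (d), and conclude by Gronwall (your direct treatment of general $\alpha$ versus the paper's rescaling to $\alpha=0$ is immaterial). The only discrepancy is that you correctly observe $[M]^c=0$, so your argument yields a growth constant \emph{without} the extra $\tfrac{1}{2}p(p-1)C$ that the paper picks up by estimating a full $d[M]$ term; your vague ``symmetric treatment'' remark trying to recover that term is unnecessary, since a smaller constant still implies the stated bound with $\gamma$.
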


\begin{proof}
	    First we consider the case that $\alpha=0$. Subtract $X_t$ and $Y_t$,
    \begin{multline*}
        X_t-Y_t=S_t (X_0-Y_0)
        + \int_0^t S_{t-s} (f(s,X_s)-f(s,Y_s))ds
        + \int_0^t S_{t-s} dM_s,
    \end{multline*}
    where
    \[ M_t=\int_E (k(s,\xi,X_{s-})-k(s,\xi,Y_{s-}))d\tilde{N}. \]
    Applying It\"o type inequality (Theorem~\ref{theorem:ito type inequality}), for $\alpha=0$, to $X_t-Y_t$ and rewriting it with respect to random Poisson measure, we find
    \begin{multline}\label{equation:proof of stability 1}
        \| X_t-Y_t \| ^p \le \| X_0-Y_0 \| ^p \\
        +  p \underbrace {\int_0^t {\|X_s-Y_s\|^{p-2} \langle X_{s-}-Y_{s-} , (f(s,X_s)-f(s,Y_s))\rangle ds}}_\bold{A_t}\\
        + p \underbrace {\int_0^t {\|X_s-Y_s\|^{p-2} \langle X_{s-}-Y_{s-} , d M_s \rangle}}_\bold{B_t} \\
        +\frac{1}{2}p(p-1)\underbrace{\int_0^t \|X_s-Y_s\|^{p-2}d[M]_s}_\bold{C_t}        + \int_0^t \int_E  \bold{D_s} N(ds,d\xi)
    \end{multline}
where
    \begin{multline*}
        \bold{D_s}=\|X_{s-}-Y_{s-}+k(s,\xi,X_{s-})-k(s,\xi,Y_{s-})\|^p-\|X_{s-}-Y_{s-}\|^p \\
        -p\|X_{s-}-Y_{s-}\|^{p-2} \langle X_{s-}-Y_{s-},k(s,\xi,X_{s-})-k(s,\xi,Y_{s-}) \rangle .
    \end{multline*}
	
	Using Hypothesis~\ref{main_hypothesis} (a) for term $\bold{A}_t$ we find
	\begin{equation}\label{equation:proof of stability 2}
		\mathbb{E}\bold{A}_t \le M \int_0^t \mathbb{E} \|X_s-Y_s\|^p ds
	\end{equation}
	Using Hypothesis~\ref{main_hypothesis} (b) for term $\bold{C}_t$ we find
	\begin{equation}\label{equation:proof of stability 3}
		\mathbb{E}\bold{C}_t \le C \int_0^t \mathbb{E} \|X_s-Y_s\|^p ds
	\end{equation}
	
	For term $\bold{D}_s$ we have by Lemma~\ref{lemma: inequality in proof of Ito-type pth power},
	\begin{multline*}
		\bold{D}_s \le \frac{1}{2} p (p-1) \Big( \|X_{s-}-Y_{s-}\|^{p-2}
		+ \|X_{s-}-Y_{s-}+k(s,\xi,X_{s-})-k(s,\xi,Y_{s-})\|^{p-2}\Big)\\
		\|k(s,\xi,X_{s-})-k(s,\xi,Y_{s-})\|^2\\
		\le \frac{1}{2} p (p-1) \Big((2^{p-2}+1)\|X_{s-}-Y_{s-}\|^{p-2} + 2^{p-2} \|k(s,\xi,X_{s-})-k(s,\xi,Y_{s-})\|^{p-2}\Big)\\
		\|k(s,\xi,X_{s-})-k(s,\xi,Y_{s-})\|^2
	\end{multline*}
	Using Hypothesis~\ref{main_hypothesis} (b) and (d), we find
	\begin{multline}\label{equation:proof of stability 4}
		\mathbb{E} \int_E \bold{D}_s \nu(d\xi) ds \le \frac{1}{2} p (p-1) ((2^{p-2}+1) C + 2^{p-2} F)\mathbb{E} \|X_{s-}-Y_{s-}\|^p
	\end{multline}
    Taking expectations on both sides of~\eqref{equation:proof of stability 1} and noting that $\bold{B}_t$ is a martingale and substituting~\eqref{equation:proof of stability 2}, ~\eqref{equation:proof of stability 3} and ~\eqref{equation:proof of stability 4} we find
    
    \begin{equation*}
    	\mathbb{E} \|X_t-Y_t\|^p \le \mathbb{E} \|X_0-Y_0\|^p + \gamma \int_0^t \mathbb{E} \|X_s-Y_s\|^p ds \\
    \end{equation*}
    where $\gamma=p M+\frac{1}{2}p(p-1) C+ \frac{1}{2} p (p-1) ((2^{p-2}+1) C + 2^{p-2} F)$.
	Now applying Gronwall's inequality the statement follows. Hence the proof for the case $\alpha=0$ is complete. Now for the general case, apply the change of variables used in Lemma~\ref{lemma: alpha=0}.

\end{proof}

\begin{remark}
	 The results of this section remain valid by adding a Wiener noise term to equation~\eqref{main_equation}. i.e. for the equation
\begin{equation}\label{Wiener_noise}
    dX_t=AX_t dt+f(t,X_t) dt + g(t,X_{t-})d W_t + \int_E k(t,\xi,X_{t-}) \tilde{N}(dt,d\xi),
\end{equation}
	where $W_t$ is a cylindrical Wiener process on a Hilbert space $K$, independent of $N$ and $g(t,x,\omega):\mathbb{R}^+\times H\times \Omega \to L_{HS}(K,H)$ (Space of Hilbert-Schmidt operators from $K$ to $H$) is Lipschitz and has linear growth. The proofs are straightforward generalizations of the proofs of this section.
\end{remark}


\begin{thebibliography}{99}


    \bibitem{Albeverio-Mandrekar-Rudiger-2009} Albeverio, S., Mandrekar, V., and R\"udiger, B.  Existence of Mild Solutions for Stochastic Differential Equations and Semilinear Equations with Non-Gaussian L\'evy Noise. \emph{Stochastic Processes and their Applications} 119:835-863, 2009.


    
    

	
	
	\bibitem{Brzezniak-Hausenblas-Zhu} Brze\'zniak, Zdzisław, Erika Hausenblas, and Jiahui Zhu. "Maximal inequality of stochastic convolution driven by compensated Poisson random measures in Banach spaces." arXiv preprint arXiv:1005.1600 (2010).
	
    \bibitem{Curtain-Pritchard} Curtain, R. F., and Pritchard, A. J.  \emph{Infinite dimensional linear systems theory}, Springer-Verlag, 1978.






    \bibitem{Hamedani-Zangeneh-stopped} Hamedani, H. D., and Zangeneh, B. Z. 2001. Stopped Doob Inequality for $p$-th Moment, $0<p<\infty$, Stochastic Convolution Integrals. \emph{Stochastic Analysis and Applications} 19(5):771-798.


	\bibitem{Ichikawa} Ichikawa, A. (1986). Some inequalities for martingales and stochastic convolutions. Stochastic Analysis and Applications, 4(3), 329-339.

    \bibitem{Jahanipour-Zangeneh} Jahanipur, R., and Zangeneh, B. Z. 2000. Stability of Semilinear Stochastic Evolution Equations with Monotone Nonlinearity. \emph{Mathematical Inequalities and Applications} 3:593-614.




    \bibitem{Kotelenez-1982} Kotelenez, P. 1982. A submartingale type inequality with applications to stochastic evolution equations. \emph{Stochastics} 8:139-151.

    \bibitem{Kotelenez-1984} Kotelenez, P. 1984. A Stopped Doob Inequality for Stochastic Convolution Integrals and Stochastic Evolution Equations. \emph{Stochastic Analysis and Applications} 2(3):245-265


    \bibitem{Krylov-Rozovskii} Krylov, N. V., and Rozovskii, B. L. 1981. Stochastic Evolution Equations. \emph{Journal of Soviet Mathematics} 16:1233-1277.
	
	

	\bibitem{Marinelli-Prevot-Rockner} Marinelli, C., Prevot, C., and Rockner, M. (2008). Regular dependence on initial data for stochastic evolution equations with multiplicative Poisson noise, 1–26.
	
	\bibitem{Marinelli-Rockner-wellposedness} Marinelli, Carlo, and Michael R\"ockner. "Well-posedness and asymptotic behavior for stochastic reaction-diffusion equations with multiplicative Poisson noise." Electron. J. Probab 15.49 (2010): 1528-1555.

    \bibitem{Pazy} A. Pazy, \emph{Semigroups of Linear Operators and Applications to Partial Differential Equations}, Springer, 1983.
    
	\bibitem{Pardoux} Pardoux, \'E. 1975. Equations aux d\'eriv\'es partielles stochastiques non lineaires monotones: Etude de solutions fortes de type Ito. PHD Thesis.

    \bibitem{Peszat-Zabczyk} Peszat, S., and Zabczyk, J. 2007. \emph{Stochastic Partial Differential Equations With L\'evy Noise}, Cambridge University Press.



    \bibitem{Salavati_Zangeneh:existence and uniqueness} Salavati, Zangeneh, Semilinear Stochastic Evolution Equations with L\'evy Noise and Monotone Nonlinearity, arXiv:1304.2122v1.

	\bibitem{Tubaro} Tubaro, Luciano. "An estimate of Burkholder type for stochastic processes defined by the stochastic integral." Stochastic Analysis and Applications 2.2 (1984): 187-192.


	
    \bibitem{Zangeneh-Thesis} Zangeneh,  B. Z. 1990. \emph{Semilinear Stochastic Evolution Equations}, Ph.D Thesis, University of British Columbia, Vancouver, B.C. Canada.


    \bibitem{Zangeneh-Paper} Zangeneh, B. Z. 1995. Semilinear stochastic evolution equations with monotone nonlinearities. \emph{Stochastics Stochastics Reports} 53:129-174.



\end{thebibliography}
\end{document}